\newcommand{\R}{{\mathbb R}}       
\newcommand{\DD}{{\mathcal D}}
\newcommand{\HH}{{\mathcal H}}
\newcommand{\BZ}{{\mathcal B}}
\newcommand{\TT}{{\mathcal T}}
\newcommand{\diam}{\operatorname{diam}}
\newcommand{\dist}{{\rm dist}}
\newcommand{\rf}[1]{{\eqref{#1}}}
\newcommand{\supp}{\operatorname{supp}}
\newcommand{\ve}{{\varepsilon}}
\newcommand{\vv}{{\vspace{2mm}}}
\newcommand{\vvv}{{\vspace{3mm}}}
\newcommand{\wt}[1]{{\widetilde{#1}}}
\newcommand{\noi}{\noindent}
\newcommand{\nex}{{\mathsf{Next}}}
\newcommand{\ttt}{{\mathsf{Top}}}
\newtheorem{theorem}{Theorem}[section]
\newtheorem*{theorem*}{Theorem}
\newtheorem*{lemma*}{Lemma}
\newtheorem*{theorema*}{Theorem A}
\newtheorem*{theoremb*}{Theorem B}
\newtheorem*{theoremc*}{Theorem C}
\newtheorem*{mainlemma*}{Main Lemma}
\newtheorem{lemma}[theorem]{Lemma}
\theoremstyle{definition}
\def\XXint#1#2#3{{\setbox0=\hbox{$#1{#2#3}{\int}$ }
\vcenter{\hbox{$#2#3$ }}\kern-.58\wd0}}
\theoremstyle{remark}
\newtheorem{remark}[theorem]{\bf Remark}
\numberwithin{equation}{section}
\begin{document}

\title{Rectifiability of measures and the $\beta_p$ coefficients}


\author{Xavier Tolsa}
\address{Xavier Tolsa
\\
ICREA, Passeig Llu\'{\i}s Companys 23, 08010 Barcelona, Catalonia, and\\
Departament de Matem\`atiques and BGSMath
\\
Universitat Aut\`onoma de Barcelona
\\
08193 Bellaterra (Barcelona), Catalonia
}

\email{xtolsa@mat.uab.cat}

\thanks{The author was supported by the ERC grant 320501 of the European Research Council (FP7/2007-2013) and partially supported by MTM-2016-77635-P,  MDM-2014-044 (MICINN, Spain), 2014-SGR-75 (Catalonia), and by Marie Curie ITN MAnET (FP7-607647).
}

\begin{abstract}
In some former works of Azzam and Tolsa it was shown that $n$-rectifiability can be characterized in terms
of a square function involving the David-Semmes $\beta_2$ coefficients. In the present paper we 
construct some counterexamples which show that a similar characterization does not hold
for the $\beta_p$ coefficients with $p\neq2$. This is in strong contrast with what happens in the case of uniform
$n$-rectifiability. In the second part of this paper we provide an alternative argument for a recent result of Edelen, Naber and Valtorta about the $n$-rectifiability of measures with bounded lower $n$-dimensional density. Our alternative proof
follows from a slight variant of the corona decomposition in one of the aforementioned works of Azzam and Tolsa and a suitable approximation argument.
\end{abstract}

\maketitle

\section{Introduction}

Let $\mu$ be a Radon measure in $\R^d$.
One says that $\mu$ is $n$-rectifiable if
there are Lipschitz maps
$f_i:\R^n\to\R^d$, $i=1,2,\ldots$, such that 
\begin{equation}\label{eqdef00}
\mu\biggl(\R^d\setminus\bigcup_i f_i(\R^n)\biggr) = 0,
\end{equation}
and $\mu$ is absolutely continuous with respect to the $n$-dimensional Hausdorff measure $\HH^n$. 
A set $E\subset \R^d$ is called $n$-rectifiable if the measure $\HH^n|_E$ is $n$-rectifiable.
On the other hand, $E$ is called purely $n$-unrectifiable if any $n$-rectifiable subset $F\subset E$ 
satisfies $\HH^n(F)=0$.

The study of $n$-rectifiability of sets and measures is one of the main objectives of geometric measure theory.
The introduction of multiscale quantitative techniques by Jones \cite{Jones} in the 1990's was very fruitful and influential in the area of geometric analysis because its applications to other related questions, for example in connection with singular integrals and analytic capacity (see \cite{DS1}, \cite{DS2}, \cite{David-vitus}, \cite{Leger}, \cite{NToV}, or \cite{Tolsa-bilip}, for instance).

In the monograph \cite{DS1}, David and Semmes introduced the notion of uniform $n$-rectifiability,
which should be considered as a quantitative version of $n$-rectifiability.
Let $\mu$ be an $n$-AD-regular (i.e., $n$-Ahlfors-David regular) Radon measure, that is, for some constant $c>0$, 
\begin{equation}\label{eq-ad}
c^{-1}r^n\leq \mu(B(x,r))\leq c\,r^n\quad \mbox{ for all $x\in
\supp\mu$ and $0<r\leq \diam(\supp\mu)$.}
\end{equation}
The measure $\mu$ is called uniformly  $n$-{{rectifiable}} if, besides being 
$n$-AD-regular,
there exist constants $\theta, M >0$ such that for all $x \in \supp\mu$ and all $0<r\leq \diam(\supp\mu)$ 
there is a Lipschitz mapping $g$ from the ball $B_n(0,r)\subset\R^{n}$ to $\R^d$ with $\text{Lip}(g) \leq M$ such that
$$
\mu(B(x,r)\cap g(B_{n}(0,r)))\geq \theta \,r^{n}.$$
A set $E\subset\R^d$ is called uniformly $n$-{{rectifiable}} if $\HH^n|_E$ is uniformly $n$-{{rectifiable}}.

In  \cite{DS1} and \cite{DS2}, David and Semmes gave several equivalent characterizations of uniform $n$-rectifiability. 
One of the most relevant involves the $\beta_{p}$ coefficients.
For $1\leq p <\infty$, $x\in\R^d$, $r>0$, one defines
$$\beta_{\mu,p}^n(x,r) = \inf_{\text{$L\subset\R^d$ is an $n$-plane}} \left(\int_{B(x,r)} \left(\frac{\dist(y,L)}r\right)^p\,
\frac{d\mu(y)}{r^n}\right)^{1/p},$$
and also
$$\wt\beta_{\mu,p}^n(x,r) = \inf_{\text{$L\subset\R^d$ is an $n$-plane}} \left(\int_{B(x,r)} \left(\frac{\dist(y,L)}r\right)^p\,
\frac{d\mu(y)}{\mu(B(x,r)}\right)^{1/p},$$
It was shown in \cite{DS1} that, for $1\leq p <2n/(n-2)$ in the case $n> 2$ and $1<p<\infty$ in the case $n=1$ or $2$, an $n$-AD-regular measure $\mu$ in $\R^d$ is uniformly $n$-rectifiable if
and only if 
\begin{equation}\label{eqds1}
\int_{B(z,R)} \int_0^{R}\beta_{\mu,p}^n(x,r)^2\,\frac{dr}r\,d\mu(x)\leq c\,R^n \quad\mbox{ for all $z\in\supp\mu$, $R>0$.}
\end{equation}
Of course, the same statement is valid replacing the coefficients $\beta_{\mu,p}^n(x,r)$ by $\wt\beta_{\mu,p}^n(x,r)$,
because they are comparable for all $x\in\supp\mu$, $0<r\leq\diam(\supp\mu)$ when $\mu$ is
$n$-AD-regular.

More recently,  Jonas Azzam and the author obtained a related characterization of $n$-rectifiability for general Radon measures with positive and bounded upper $n$-dimensional density.
The upper and lower $n$-dimensional densities of $\mu$ at a point $x\in\R^d$ are defined, respectively, by
$$\Theta^{n,*}(x,\mu)= \limsup_{r\to 0}\frac{\mu(B(x,r))}{(2r)^n},\qquad
\Theta^{n}_*(x,\mu)= \liminf_{r\to 0}\frac{\mu(B(x,r))}{(2r)^n}.$$
The aforementioned characterization of $n$-rectifiability is the following:

\begin{theorema*}
Let $\mu$ be a Radon measure in $\R^d$ 
such that $0<\Theta^{n,*}(x,\mu)<\infty$ for $\mu$-a.e.\ $x\in\R^d$. Then $\mu$ is $n$-rectifiable if
and only if
\begin{equation}\label{eqjones*}
\int_0^1 \beta_{\mu,2}^n(x,r)^2\,\frac{dr}r<\infty \quad\mbox{ for $\mu$-a.e.\ $x\in\R^d$.}
\end{equation}
\end{theorema*}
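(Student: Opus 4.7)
The plan is to prove the two implications separately. For the necessity direction ($\mu$ rectifiable $\Rightarrow$ square function finite $\mu$-a.e.), I would decompose $\mu$ as a countable sum of pieces, each absolutely continuous with respect to $\HH^n$ and supported on a Lipschitz graph $\Gamma_i$. Since $\HH^n|_{\Gamma_i}$ is uniformly $n$-rectifiable, the David--Semmes characterization \rf{eqds1} (with $p=2$) applies, and Fubini's theorem yields $\int_0^1 \beta_{\HH^n|_{\Gamma_i},2}^n(x,r)^2\,\frac{dr}{r}<\infty$ at $\HH^n|_{\Gamma_i}$-a.e.\ $x$. One then transfers this estimate to $\mu$ at Lebesgue points of the Radon--Nikodym density, using that at sufficiently small scales the two $\beta_2$ coefficients are comparable up to a constant depending only on the density value.

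For the sufficiency direction I would argue by contradiction. Assume there is a purely $n$-unrectifiable subset $E \subset \supp\mu$ with $\mu(E) > 0$. By an Egorov-type truncation argument, restrict attention to a compact set $F \subset E$ of positive $\mu$-measure on which one has uniform upper and lower bounds on $\Theta^{n,*}(x,\mu)$, a uniform bound $\int_0^{r_0} \beta_{\mu,2}^n(x,r)^2\,\frac{dr}{r} \leq \eta$ for some small $\eta$ to be fixed later, and a frequent sequence of scales $r \leq r_0$ at which $\mu(B(x,r)) \geq \theta_0\,r^n$.

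On $F$ one runs a corona/stopping-time decomposition, starting from a top ball centered on $F$ and descending through a tree of (quasi-)dyadic balls. One stops as soon as either the $\beta_{\mu,2}$ coefficient at the current scale exceeds a small threshold $\varepsilon_0$, or the ball fails a thickness condition derived from $\Theta^{n,*}$. Along the non-stopping tree the slow variation of $\beta_{\mu,2}$ from scale to scale forces the best-fitting $n$-planes to rotate slowly, so a David--Semmes-type construction produces a Lipschitz graph $G$ approximating $F$ in the top ball. A Carleson-type packing estimate on the stopping balls, driven by the assumed bound on the square function together with the thickness condition, then shows that $G$ captures a definite proportion of $\mu|_F$. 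Iterating on what remains of the purely unrectifiable part covers $\mu$-almost all of $F$ by Lipschitz graphs, contradicting the choice of $F$.

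The main obstacle will be running this machinery in the absence of a lower AD-regularity hypothesis: only $\Theta^{n,*}(x,\mu)>0$ is assumed, not $\Theta^{n}_*(x,\mu)>0$, so at many scales $\mu(B(x,r))$ can be arbitrarily small compared with $r^n$, and the classical David--Semmes tools, which rely on two-sided AD-regularity, do not apply directly. One must carefully quantify how ``thin'' scales interact with the $\beta$-flatness condition and build thinness into the stopping rule, while still guaranteeing that the Lipschitz graph reconstructed from the ``good'' tree carries a definite fraction of $\mu|_F$. Balancing flatness (smallness of $\beta_{\mu,2}$) against thickness (lower bounds on $\mu(B(x,r))/r^n$ at the active scales) is where essentially all of the technical work of the argument will be concentrated.
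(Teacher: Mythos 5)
Theorem A is not actually proved in this paper; the ``if'' direction is quoted from \cite{Azzam-Tolsa-GAFA} and the ``only if'' direction from \cite{Tolsa-CVPDE}, and what the paper contributes (Section 5) is a variant of the corona decomposition of \cite{Azzam-Tolsa-GAFA} used to reprove Theorem B. So the comparison has to be against those proofs and against what the present paper says about them.

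Your necessity argument has a concrete gap that this very paper refutes. You propose: cover the rectifiable measure by Lipschitz graphs $\Gamma_i$, apply the David--Semmes estimate \rf{eqds1} to each $\HH^n|_{\Gamma_i}$, then transfer to $\mu$ at Lebesgue points ``because the two $\beta_2$ coefficients are comparable up to a constant depending on the density.'' That comparability is false: $\beta_{\mu,2}(x,r)$ integrates over all of $\mu$ in $B(x,r)$, and even at a Lebesgue point of $x$ for $\Gamma_i$, the mass of $\mu$ sitting on the other graphs near $x$ at scale $r$ contributes, and although it is ``small'' for small $r$, the accumulated contribution over all dyadic scales can diverge. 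Moreover, if your transfer step were valid, the whole argument would go through verbatim for every $p$ in the David--Semmes range $1\le p<2n/(n-2)$, since \rf{eqds1} holds for all such $p$. But Theorem \ref{teocount3} of this paper exhibits a $1$-rectifiable set $E$ with $0<\HH^1(E)<\infty$ for which $\int_0^1\beta^1_{\HH^1|_E,p}(x,r)^2\,\frac{dr}{r}=\infty$ a.e.\ for every $p>2$. So your reduction cannot be correct as stated; the fact that $p=2$ is the unique exponent for which necessity holds is exactly what forces the proof of the ``only if'' direction in \cite{Tolsa-CVPDE} to be substantially more delicate than a cover-and-transfer argument. The cancellation/quadratic structure specific to $p=2$ (roughly, orthogonality and the fact that squares add) is what makes the cross-graph contributions summable, and your sketch does not isolate or exploit it.

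Your sufficiency sketch is closer in spirit to the actual mechanism: \cite{Azzam-Tolsa-GAFA} does build a corona decomposition into trees within which $\mu$ is well approximated by Hausdorff measure on a Lipschitz manifold, and the packing estimate \rf{eqpack00} stated in Section 5.1 is precisely the Carleson-type bound you allude to. However, the stopping rule is not simply ``stop when $\beta_{\mu,2}$ exceeds a threshold''; the trees are organized so that the density $\Theta_\mu(2B_Q)$ stays comparable to $\Theta_\mu(2B_R)$ throughout the tree (property (3) in Section 5.1), and the thinness/thickness bookkeeping that you flag as the main obstacle is indeed where the work happens, but it is driven by this density-comparability structure rather than a purely $\beta$-driven stopping time. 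You also omit the step where the upper density bound $\Theta^{n,*}(x,\mu)<\infty$ is used to get the growth hypothesis \rf{eqgrowth00} needed for the packing estimate, which is not automatic from the stopping rule you describe. The sketch captures the shape of the argument but papers over the parts that make it nontrivial.
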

\vv

The ``if'' direction of the theorem was proven in \cite{Azzam-Tolsa-GAFA}, and the
``only if'' one in \cite{Tolsa-CVPDE}. As an immediate corollary of the above result, it follows
that a set $E\subset\R^d$ with $\HH^n(E)<\infty$ is $n$-rectifiable if and only if
$$\int_0^1 \beta_{\HH^n|_E,2}^n(x,r)^2\,\frac{dr}r<\infty \quad\mbox{ for $\HH^n$-a.e.\ $x\in E$.}$$

For other criteria for rectifiability in terms of related square functions which apply to measures which  are absolutely continuous with respect to Hausdorff measure,  see \cite{Lerman} or  \cite{Tolsa-memo}, for example. 
For other recent works which study the rectifiability of more general measures, we refer the reader to 
\cite{Badger-Schul1}, \cite{Badger-Schul}, \cite{MO}, \cite{ADT}, or \cite{ATT}.

We also remark that quite recently Edelen,  Naber and Valtorta \cite{ENV} showed that
Theorem A also holds for Radon measures $\mu$ satisfying the conditions
\begin{equation}\label{eqenv*}
\Theta^{n,*}(x,\mu)>0 \quad \text{ and } \quad \Theta^{n}_*(x,\mu)<\infty \quad \text{ for $\mu$-a.e.
$x\in\R^d$,}
\end{equation}
instead of the more restrictive one 
\begin{equation}\label{eqenv**}
0<\Theta^{n,*}(x,\mu)<\infty \quad \text{ for $\mu$-a.e.
$x\in\R^d$.}
\end{equation}
That is, they proved the following:

\begin{theoremb*}[\cite{ENV}]
Let $\mu$ be a finite Borel measure in $\R^d$ 
satisfying \rf{eqenv*}.
If
\begin{equation}\label{eqjones**}
\int_0^1 \beta_{\mu,2}^n(x,r)^2\,\frac{dr}r<\infty \quad\mbox{ for $\mu$-a.e.\ $x\in\R^d$,}
\end{equation}
then $\mu$ is $n$-rectifiable.
\end{theoremb*}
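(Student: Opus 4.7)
The plan is to reduce Theorem~B to Theorem~A by a localization step followed by a slight variant of the corona decomposition from \cite{Azzam-Tolsa-GAFA}. As a first step, by splitting $\mu$ into countably many pieces on sets of comparable lower density, and then removing subsets of arbitrarily small measure, one reduces to the case where $\mu$ is compactly supported, satisfies $m \leq \Theta^{n,*}(x,\mu)$ and $\Theta^{n}_*(x,\mu) \leq M$ for $\mu$-a.e.\ $x$, and has the integral in \eqref{eqjones**} uniformly bounded on $\supp\mu$. The obstruction to invoking Theorem~A directly is that the upper density may be infinite on a $\mu$-null set, which prevents the standard construction of a Christ-David-Mattila lattice associated with $\mu$ and blocks the density-based stopping conditions used in \cite{Azzam-Tolsa-GAFA}.

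To circumvent this, one sets up a David-Mattila-type dyadic lattice adapted to $\mu$ and organizes its cubes into stopping-time regions (trees) with top cubes $\{R\}$, along the lines of \cite{Azzam-Tolsa-GAFA}. The slight variant consists in introducing one extra stopping rule: while descending from $R$, one stops at the first cube $Q \subset R$ whose normalized density $\mu(Q)/\ell(Q)^n$ exceeds a large multiple of $\mu(R)/\ell(R)^n$. This enforces a uniform upper-density bound on the cubes of each tree, which is precisely what the argument of \cite{Azzam-Tolsa-GAFA} actually needs; in particular, it restores the AD-regular-like behavior along trees. The finite-lower-density hypothesis $\Theta^{n}_*(x,\mu) < \infty$ is then used through a Vitali-type covering argument to show that the union of the top cubes exhausts $\mu$-almost every point of $\supp\mu$.

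For each tree with top $R$, I would build an auxiliary $n$-AD-regular measure $\sigma_R$ supported on a Lipschitz graph approximating the tree, so that the $\beta_2$ coefficients of $\sigma_R$ are pointwise controlled by those of $\mu$. By the David-Semmes characterization \eqref{eqds1}, $\sigma_R$ is then uniformly $n$-rectifiable, and a comparison argument transfers $n$-rectifiability to $\mu$ restricted to the tree; summing over all tops completes the proof. The main difficulty lies in verifying that the new stopping rule leaves an exhaustive family of tops: this requires a fresh packing estimate in which the hypothesis $\Theta^{n}_*(x,\mu) < \infty$ must replace the finite-upper-density bound on which the original Azzam-Tolsa corona argument relies, and this is where the delicate interplay between the stopping rules, the $\beta_2$ control, and the approximation by $\sigma_R$ has to be balanced.
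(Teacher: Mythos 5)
You correctly identify that the key reduction is to Theorem A, and that the corona decomposition from \cite{Azzam-Tolsa-GAFA} is the right tool, but your proposed route diverges from the paper's at the crucial step and leaves a genuine gap. First, a small inaccuracy in the setup: the obstruction is not that $\Theta^{n,*}(x,\mu)$ may be infinite on a $\mu$-null set (that would be harmless), but that, a priori, it could be infinite on a set of \emph{positive} $\mu$-measure --- proving that $M_n\mu(x)<\infty$ $\mu$-a.e.\ is exactly the content of what must be established. Second, and more substantively, your ``slight variant'' of adding a density-threshold stopping rule does not isolate the real difficulty: the corona decomposition of \cite{Azzam-Tolsa-GAFA} \emph{already} enforces density control inside each tree (property~(3) in Section \ref{sec41}). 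What uses the missing upper-density bound is the packing estimate \eqref{eqpack00} for the collection of top cubes $\ttt_\mu$; without a polynomial growth bound $\mu(B(x,r)) \leq C_* r^n$ for small $r$, the proof of the packing condition breaks down, and you acknowledge this (``this requires a fresh packing estimate \ldots'') but do not supply it. There is no clear way to run that argument with the lower density in place of the upper one, and your suggested use of $\Theta^n_*<\infty$ (a Vitali cover to show the tops exhaust $\supp\mu$) addresses a different, and less delicate, point.

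The paper avoids this by never applying the corona decomposition to $\mu$ itself. Starting from a Vitali cover of $\supp\mu$ by disjoint balls $B_i$ on which $\mu$ is doubling and has bounded density (Lemma \ref{lemdb*}, which is where $\Theta^n_*(x,\mu)<\infty$ enters), one forms the auxiliary measure $\wt\mu = \sum_{i\in I_0} \frac{\mu(B_i)}{\HH^n(D_i)}\,\HH^n|_{D_i}$ supported on finitely many $n$-disks $D_i\subset B_i$. This $\wt\mu$ automatically satisfies the scale-restricted growth bound \eqref{eqgrowth00'}, so the (slightly modified) packing estimate \eqref{eqpack00} together with property~(3) controls $\int M_n\wt\mu\,d\wt\mu$ in terms of $\iint_0^\infty \beta_{\wt\mu,2}(x,r)^2\,\frac{dr}{r}\,d\wt\mu(x)$; that quantity is in turn compared to $\iint_0^\infty \beta_{\mu,2}(x,r)^2\,\frac{dr}{r}\,d\mu(x)$, with an error absorbed by taking the doubling parameter $\Lambda$ large. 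Letting $\ve\to 0$ and $\rho\to 0$ yields $\int M_n\mu\,d\mu<\infty$, hence $M_n\mu<\infty$ $\mu$-a.e., and Theorem A applies. In short: the missing idea in your sketch is the \emph{outer} approximating measure $\wt\mu$; the paper's ``slight variant'' of the corona decomposition is simply allowing the growth bound to hold only at scales $r\le r_0$ (a cost-free change), not a new stopping rule, and not the per-tree Lipschitz approximations $\sigma_R$, which are already internal to Theorem A's proof.
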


Notice that the condition \rf{eqenv**} ensures that the measure $\mu$ is absolutely continuous
with respect to $\HH^n$, while the 
 the assumption \rf{eqenv*} does not. However, Theorem B implies that if both \rf{eqenv**} and \rf{eqjones**} hold, then $\mu$ is absolutely continuous
with respect to $\HH^n$. This is the main novelty in Theorem B.
 
\vv
In view of the characterization of uniform $n$-rectifiability in terms of the 
$\beta_{\mu,p}^n$ coefficients with $1\leq p <2n/(n-2)$ by David and Semmes described in \rf{eqds1}, it is natural
to think that perhaps Theorem A is also valid with $\beta_{\mu,2}^n$ replaced by $\beta_{\mu,p}^n$ for
some $p\neq2$. Under some additional assumptions on the measure $\mu$, the following result is already known:

\begin{theoremc*}
Let $1\leq p <2n/(n-2)$ in the case $n>2$, and $1\leq p < \infty$ in the case $n=1$ or $2$.
Let $\mu$ be a Radon measure in $\R^d$. The following hold:
\begin{itemize}
\item[(a)] Suppose that $0<\Theta^{n}_*(x,\mu)\leq \Theta^{n,*}(x,\mu)<\infty$ for $\mu$-a.e.\ $x\in\R^d$. If 
\begin{equation}\label{eqbeta*11}
\int_0^1 \beta_{\mu,p}^n(x,r)^2\,\frac{dr}r<\infty \quad\mbox{ for $\mu$-a.e.\ $x\in\R^d$,}
\end{equation}
then $\mu$ is $n$-rectifiable.

\item[(b)]  Suppose that $\mu=\HH^n|_E$ for some $E\subset \R^n$ and that $\mu$ is $n$-AD-regular. Then \rf{eqbeta*11} holds.
\end{itemize}

\end{theoremc*}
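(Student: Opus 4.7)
The argument splits into the trivial part (b) and the substantive part (a).

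For (b), the statement is immediate: since $E\subset\R^n$, the $n$-plane $L=\R^n$ itself (viewed inside the ambient $\R^d$) is a valid competitor in the infimum defining $\beta_{\mu,p}^n(x,r)$, and it achieves $\dist(y,L)=0$ for every $y\in\supp\mu\subset E$. Hence $\beta_{\mu,p}^n(x,r)\equiv 0$ on $\supp\mu\times(0,\infty)$, and \rf{eqbeta*11} holds trivially.

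For (a), the strategy is to reduce the hypothesis to the $\beta_2$ condition of Theorem A. When $p\ge 2$, this is immediate from H\"older's inequality applied to the probability measure $d\mu/\mu(B(x,r))$ on $B(x,r)$:
$$\beta_{\mu,2}^n(x,r)\le\bigl(\mu(B(x,r))/r^n\bigr)^{1/2-1/p}\,\beta_{\mu,p}^n(x,r).$$
Under the upper-density hypothesis the prefactor is uniformly bounded for $\mu$-a.e.\ $x$ and all sufficiently small $r$, so the hypothesis transfers to $\beta_{\mu,2}^n$ and Theorem A gives $n$-rectifiability.

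The substantive case is $1\le p<2$, where the above H\"older inequality runs in the wrong direction and the lower-density hypothesis becomes essential. My plan is to decompose $\supp\mu = N\cup\bigcup_k E_k$, with $N$ a $\mu$-null set and, on each $E_k$, uniform constants $c_k,r_k>0$ such that $c_k^{-1}r^n\le\mu(B(x,r))\le c_k r^n$ for all $x\in E_k$ and $0<r\le r_k$. Thus $\mu$ is $n$-AD-regular near $E_k$ at all scales $\le r_k$. On each such piece I would invoke the David--Semmes equivalence of the Carleson conditions for $\beta_p$ and $\beta_2$ on $n$-AD-regular measures (valid precisely in the stated range of $p$): integrating \rf{eqbeta*11} against $d\mu|_{E_k}$ and using Fubini converts the pointwise hypothesis into a Carleson-type bound for $\beta_{\mu,p}^n$ at scales $\le r_k$; by David--Semmes this is equivalent to the analogous Carleson bound for $\beta_{\mu,2}^n$; a further Fubini argument returns to the pointwise statement $\int_0^{r_k}\beta_{\mu,2}^n(x,r)^2\,dr/r<\infty$ for $\mu$-a.e.\ $x\in E_k$. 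Summing over $k$ and applying Theorem A yields $n$-rectifiability.

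The main obstacle is the localization in the preceding step: the David--Semmes equivalence is formulated for measures that are $n$-AD-regular \emph{globally} and at \emph{all} scales, whereas $\mu$ on $E_k$ is only locally AD-regular (at scales $\le r_k$). The standard fix, which I would carry out, is to replace $\mu|_{E_k}$ by a genuinely $n$-AD-regular extension $\wt\mu_k$ on $\R^d$ obtained by padding with controlled mass at unusually small and large scales, and then to verify that $\beta_{\wt\mu_k,p}^n$ and $\beta_{\mu,p}^n$ are comparable at scales $\le r_k$ for $\mu$-a.e.\ $x\in E_k$, so that the Carleson bound transfers from $\wt\mu_k$ back to $\mu$. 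The corona-decomposition machinery of \cite{Azzam-Tolsa-GAFA} and \cite{Tolsa-CVPDE} provides the natural framework to formalize this comparison.
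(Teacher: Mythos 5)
The paper does not prove Theorem C; it cites Pajot \cite{Pajot} for part~(a) in the special case $\mu=\HH^n|_E$ and for part~(b), and Badger--Schul \cite{Badger-Schul-pams} for the general version of~(a). Its only commentary on the argument is the remark that the positive lower density in~(a) and the $n$-AD-regularity in~(b) are what ``allow to reduce the arguments to the case when the measure $\mu$ is $n$-AD-regular and to apply then the result of David and Semmes stated in \rf{eqds1}.'' Measured against that, your $p\geq2$ case of~(a) via H\"older plus Theorem~A is correct and is essentially the observation the paper itself makes in the paragraph before Theorem~\ref{teocount2}. Your $1\leq p<2$ sketch points in the right direction (decompose $\supp\mu$ into pieces with uniform two-sided density estimates at small scales, extend each piece to a globally AD-regular measure, transfer the $\beta_p$ Carleson condition, invoke David--Semmes), which is consistent with the cited strategy; but you defer precisely the steps that constitute Pajot's and Badger--Schul's contributions, namely constructing the AD-regular extension $\wt\mu_k$ and showing that the Carleson condition and the rectifiability pass back and forth between $\wt\mu_k$ and $\mu|_{E_k}$. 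As a sketch it is plausible, but it is not a proof.

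The real problem is in part~(b). You take the statement ``$E\subset\R^n$'' at face value and observe that $\beta_{\mu,p}^n\equiv0$, making~(b) vacuous. But the paper attributes~(b) to Pajot and explicitly stresses that the $n$-AD-regularity of $\mu$ ``plays an essential role'' in~(b). Both of those remarks would be absurd if $E$ sat inside a fixed $n$-plane, as would the restriction $p<2n/(n-2)$. The ``$\R^n$'' is evidently a typo for ``$\R^d$'', and the intended~(b) is Pajot's genuine theorem that for an $n$-rectifiable, $n$-AD-regular set $E\subset\R^d$ the square function $\int_0^1\beta_{\HH^n|_E,p}^n(x,r)^2\,\frac{dr}{r}$ is finite $\HH^n$-a.e. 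Your argument establishes only the degenerate reading and does not touch the actual theorem; at minimum you should have flagged that the statement as printed becomes trivial for a reason entirely unrelated to AD-regularity, which directly contradicts the paper's own commentary on~(b).
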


The statement (a) of the theorem was first proved by Pajot \cite{Pajot} in the particular case where $\mu=\HH^n|_E$, with $E\subset\R^d$ such that $\HH^n(E)<\infty$. Later on Badger and Schul \cite{Badger-Schul-pams} showed that this extends easily to any measure $\mu$ 
such that $0<\Theta^{n}_*(x,\mu)\leq \Theta^{n,*}(x,\mu)<\infty$ $\mu$-a.e. The statement (b) is also due to Pajot \cite{Pajot}.

We remark that the assumptions that $\Theta^{n}_*(x,\mu)>0$ $\mu$-a.e.\ in (a) and the fact that $\mu$ is $n$-AD-regular in (b) play an essential role in the arguments for the previous theorem. In fact, they allow to reduce the arguments  to the case when the measure $\mu$ is $n$-AD-regular and to apply then the
result of David and Semmes stated in \rf{eqds1}.\vv

For arbitrary Radon measures $\mu$, from H\"older's inequality it follows that, for $1\leq p<q$,
$$\beta_{\mu,p}^n(x,r)\leq \left(\frac{\mu(B(x,r))}{r^n}\right)^{\frac1p-\frac1q}\,\beta_{\mu,q}(x,r).$$
By Theorem A, this implies that if $\mu$ is $n$-rectifiable and $1\leq p \leq2$, then
\begin{equation}\label{eqjones*p}
\int_0^1 \beta_{\mu,p}^n(x,r)^2\,\frac{dr}r<\infty \quad\mbox{ for $\mu$-a.e.\ $x\in\R^d$},
\end{equation}
and in the other direction, we also deduce that if  $0<\Theta^{n,*}(x,\mu)<\infty$ $\mu$-a.e. and \rf{eqjones*p} holds for some $p\geq2$,
then $\mu$ is $n$-rectifiable. However, from these statements one can not conclude the validity of the double implication in Theorem A for some $\beta_{\mu,p}^n$ with $p\neq2$. 
In this paper we show that indeed $p=2$ is the only case when Theorem A is valid, which may look somewhat surprising in view of the results for uniform $n$-rectifiability.
More precisely, we have:

\begin{theorem}\label{teocount2}
There exists a set $E\subset\R^2$ such that $0<\HH^1(E)<\infty$, which is purely $1$-unrectifiable, and so that,
for $1\leq p<2$,
\begin{equation}\label{eq**1}
\int_0^1\beta_{\HH^1|_E,p}^1(x,r)^2\,\frac{dr}r<\infty \quad \mbox{for $\HH^1$-a.e.\ $x\in E$.}
\end{equation}
\end{theorem}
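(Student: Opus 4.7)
The idea is to construct $E\subset[0,1]^2$ as a Cantor-like set that, at each scale, concentrates most of its mass along a line but with a small ``corner'' perturbation off that line. The direction of this line alternates between horizontal and vertical at successive scales, so that no single line can serve as an approximate tangent to $\mu:=\HH^1|_E$ at any point; this will give pure $1$-unrectifiability. Meanwhile, the corner-mass fraction is small enough that $\beta_p$ for $p<2$ is $L^2$-summable in $\log r$, while $\beta_2$ is not.

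\textbf{Construction.} Set $E_0:=[0,1]^2$ and $N_k:=4k$. At stage $k\ge1$, replace each stage-$(k-1)$ parent square $Q$ of side $s_{k-1}$ by $N_k$ disjoint sub-squares of side $s_k:=s_{k-1}/N_k$: four at the four corners of $Q$, and the remaining $N_k-4$ along the horizontal midline of $Q$ when $k$ is odd, or along the vertical midline of $Q$ when $k$ is even. Let $E:=\bigcap_k E_k$ and set $\epsilon_k:=4/N_k=1/k$ (the ``corner fraction'').

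\textbf{Step 1 (measure).} A standard Cantor-set argument yields $\HH^1(E)\in(0,\infty)$: the identity $N_k s_k=s_{k-1}$ gives that the total ``length'' at each stage is preserved.

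\textbf{Step 2 ($\beta_p$-summability).} For $x\in E$ and $r\simeq s_{k-1}$, choose $L$ to be the midline of the stage-$(k-1)$ parent containing $x$. The $N_k-4$ aligned sub-squares sit on $L$ up to smaller-scale perturbations, while the $4$ corner sub-squares, of total mass $\simeq \epsilon_k\,r$, lie at distance $\simeq r$ from $L$. Hence
\[
\beta_{\mu,p}^1(x,r)^p \;\le\; \frac{1}{r}\int_{B(x,r)}\!\!\Bigl(\frac{\dist(y,L)}{r}\Bigr)^p d\mu(y) \;\lesssim\; \epsilon_k,
\]
so $\beta_{\mu,p}^1(x,r)\lesssim \epsilon_k^{1/p}$ for $r\in[s_k,s_{k-1}]$, and therefore
\[
\int_0^1 \beta_{\mu,p}^1(x,r)^2\,\frac{dr}{r} \;\lesssim\; \sum_k \epsilon_k^{2/p}\log N_k \;\simeq\; \sum_k \frac{\log(4k)}{k^{2/p}},
\]
which converges precisely for $p<2$ (and diverges for $p=2$, consistently with Theorem A applied to the unrectifiable $E$).

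\textbf{Step 3 (pure $1$-unrectifiability).} I would show that no approximate tangent line exists at $\mu$-a.e.\ $x\in E$. At scale $r\simeq s_{k-1}$, a fraction $1-O(\epsilon_k)$ of the mass in $B(x,r)$ lies in an $O(\epsilon_k r)$-tube around a horizontal line (odd $k$) or a vertical line (even $k$). Thus, for any candidate line $L$, at every other scale $L$ is transverse to the concentration direction and the mass in an $\epsilon r$-tube around $L$ is only $O(\epsilon)$, far from $1$. Consequently $L$ is not approximately tangent to $\mu$ at $x$, and by the Mattila--Besicovitch characterization ($\HH^1$-finite sets are rectifiable iff they admit approximate tangents a.e.), $E$ is purely $1$-unrectifiable.

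\textbf{Main obstacle.} Step~3 is the subtlest point: one must verify that \emph{no} line (horizontal, vertical, or oblique) can serve as an approximate tangent, not just the ones aligned with the alternating concentration directions. The guiding idea is that horizontal-then-vertical concentration at successive scales is incompatible with any fixed tangent direction, but making this rigorous requires a uniform case analysis across line orientations and scales, together with care at the boundary between consecutive stages.
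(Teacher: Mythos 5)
Your approach is genuinely different from the paper's: you build a two-dimensional Cantor set from squares whose internal concentration direction alternates (horizontal/vertical) at successive scales, hoping to get pure $1$-unrectifiability from the absence of tangent lines. The paper instead builds $E$ as a limit of unions of \emph{horizontal segments only} at every stage, lifting a small fraction $a_{k+1}$ of each segment by a tiny height $h_{k+1}\ll$ (scale), and gets pure unrectifiability not from ``no tangent'' but from vanishing lower density: since $\sum_k a_k=\infty$, the Borel--Cantelli lemma shows that $\HH^1$-a.e.\ $x$ is in the lifted piece for infinitely many $k$, and near such an $x$ at scale $\simeq h_k$ the density is $\lesssim a_k\to 0$, so $\Theta^1_*(x,\HH^1|_E)=0$ a.e., and pure unrectifiability follows from Mattila's density theorem (Theorem 17.6 in \cite{Mattila-book}).

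However, Step~2 of your proposal has a genuine gap. The claim $\beta_{\mu,p}^1(x,r)^p\lesssim\epsilon_k$ for all $r\in[s_k,s_{k-1}]$, obtained by taking $L$ to be the midline of the stage-$(k-1)$ parent and saying the aligned sub-squares ``sit on $L$ up to smaller-scale perturbations,'' is not correct. Because of the alternation, each aligned stage-$k$ sub-square concentrates its mass \emph{perpendicularly} to $L$, spread over a full range of order $s_k$ transverse to $L$; so for $r\simeq s_k$ one already has $\beta_{\mu,p}^1(x,r)\gtrsim c_p>0$. Worse, as soon as $r$ is slightly above $s_{k-1}$, $B(x,r)$ straddles two or more neighboring stage-$(k-1)$ squares (this is unavoidable for a.e.\ $x$), whose internal ``combs'' are parallel, of length $\simeq s_{k-1}$, and separated by a perpendicular distance $\simeq s_{k-1}$. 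No line fits this ladder: one finds $\beta_{\mu,p}^1(x,r)\gtrsim c_p>0$ uniformly in $k$ for $r$ in a fixed multiplicative window around each transition scale. Hence $\int_0^1\beta_{\mu,p}^1(x,r)^2\,\frac{dr}{r}$ picks up a contribution bounded below by a positive constant at every stage and diverges for $\HH^1$-a.e.\ $x$, the opposite of what is wanted. The paper avoids exactly this pitfall by keeping the construction collinear at every scale: near any $x$ and any scale $r$, $\supp\mu_k$ meets at most two \emph{parallel} horizontal lines whose separation $h_{k+1}$ is much smaller than $r$, and the upper line carries only an $a_{k+1}$-fraction of the mass; this gives $\beta_{\mu_{k+1},p}(x,r)\lesssim a_{k+1}^{1/p}\,(h_{k+1}/r)$, and the extra geometric factor $h_{k+1}/r$ makes the $r$-integral of $\beta^2$ over each dyadic band sum to $\lesssim\sum_k a_k^{2/p}$. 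If you want to salvage an alternating-direction idea, you would have to keep the transverse displacement at each scale much smaller than the scale itself (as the paper does with $h_k$), but then the ``no tangent from alternation'' mechanism disappears and you need a different source of unrectifiability; the paper's density mechanism ($a_k\to 0$ with $\sum a_k=\infty$) is the natural replacement.
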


Also:

\begin{theorem}\label{teocount3}
There exists a set $E\subset\R^2$ such that $0<\HH^1(E)<\infty$, which is $1$-rectifiable, and so that,
for all $p>2$,
\begin{equation}\label{eq**1'}
\int_0^1\beta_{\HH^1|_E,p}^1(x,r)^2\,\frac{dr}r=\infty \quad \mbox{for $\HH^1$-a.e.\ $x\in E$.}
\end{equation}
\end{theorem}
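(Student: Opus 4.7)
The plan is to construct $E$ as a self-similar ``bumpy'' rectifiable set. Starting from $E_0 = [0,1]\times\{0\}$, I would iteratively decorate every line segment $S$ of length $L$ in the current set by attaching along $S$ a family of parallel ``bumps'' (small horizontal segments) at each level $k\ge k_0$: the level-$k$ bumps have length $La_k$, are placed at perpendicular offset $Lc_k$ from $S$, and are spaced $Lb_k$ along $S$, with
\[
a_k = \frac{2^{-k}}{k\log^2 k},\qquad b_k = 2^{-k},\qquad c_k = 2^{-k-2}.
\]
Here $k_0$ is chosen large enough that $C := \sum_{k\ge k_0}a_k/b_k = \sum_{k\ge k_0}1/(k\log^2 k) < 1$. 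Iterating this decoration on every newly added bump and taking the closure produces $E$, which is a countable union of line segments (hence $1$-rectifiable) and whose total Hausdorff length is bounded by the geometric sum $(1-C)^{-1}$. Thus $0 < \HH^1(E) < \infty$.

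Next I would establish the pointwise lower bound $\beta^1_{\HH^1|_E,p}(x,r)^p \gtrsim 1/(k\log^2 k)$ for every $x\in E$ and every scale of the form $r = L(x)\cdot 2^{-k}$ with $k\ge k_0$, where $L(x)$ is the length of the deepest segment in the construction containing $x$. By the self-similarity built into the construction, after rescaling it suffices to take $x\in E_0$ and $r = 2^{-k}$. In this case $B(x,r)\cap E$ consists of a length-$\sim 2r$ piece of $E_0$, one level-$k$ bump of mass $a_k \sim r/(k\log^2 k)$ at height $c_k \sim r/4$ from the $E_0$-line, and lower-order contributions from levels $k' > k$ and from iterated sub-bumps, whose aggregate is controlled by the geometric factor $(1-C)^{-1}$. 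For any candidate line $\ell$, a trade-off argument over the tilt and translation of $\ell$ shows that a tilt of angle $\theta$ from horizontal costs $\gtrsim \theta^p r^{p+1}$ in the $E_0$-integral while saving at most $O(a_k c_k^{p-1}\theta r)$ in the bump-integral; solving the resulting optimization gives optimal $\theta \lesssim (k\log^2 k)^{-1/(p-1)}$, at which the bump integral is only reduced by the vanishing factor $O((k\log^2 k)^{-1/(p-1)})$. Therefore $\inf_\ell \int_{B(x,r)}\dist(\cdot,\ell)^p\,d\HH^1|_E \gtrsim a_k c_k^p \sim r^{p+1}/(k\log^2 k)$, yielding the claimed lower bound.

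Finally, summing the dyadic estimate (and using that $\beta_p$ is essentially constant between dyadic scales) gives
\[
\int_0^1 \beta^1_{\HH^1|_E,p}(x,r)^2\,\frac{dr}{r} \;\gtrsim\; \sum_{k\ge k_0}(k\log^2 k)^{-2/p}
\]
for every $x\in E$. For $p>2$ the exponent $2/p$ lies in $(0,1)$, so $\sum_k k^{-2/p}(\log k)^{-4/p} = \infty$, giving the required divergence (consistently, for $p=2$ the same sum converges, in agreement with Theorem A and the rectifiability of $E$). The main technical obstacle is the uniform-in-$\ell$ lower bound on $\beta_p$: the trade-off analysis must be executed carefully to confirm that no choice of tilt or translation for the approximating line can reduce the combined integral below the claimed order. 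Once this optimization is verified, the conclusion follows from the elementary divergence of the resulting series.
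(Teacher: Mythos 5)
Your construction is genuinely different from the paper's. The paper re-uses the Cantor-type set from Section~\ref{sec2}: $E_k$ is obtained from $E_{k-1}$ by splitting each segment into pieces and lifting a mass fraction $a_k$ to a parallel line at height $h_k$, so the limit $E$ is a totally disconnected Cantor set and rectifiability is obtained \emph{indirectly} --- since $\sum_j a_j<\infty$, the estimate \eqref{eqbetap10} with $p=2$ gives a finite $\beta_2$-square function, and Theorem~A is invoked. Your set is instead a union of nested ``bumps'' attached to a base segment, so it is a countable union of line segments and rectifiability is manifest (for this you should simply take $E$ to be the union, not the closure, which dodges the question of whether limit points carry $\HH^1$-mass). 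Both constructions hinge on the same borderline sequence $a_k\sim 1/(k\log^2 k)$, for which $\sum a_k<\infty$ while $\sum a_k^{2/p}=\infty$ when $p>2$, and both need the pointwise bound $\beta_{\mu,p}(x,r)^p\gtrsim a_k$ at the scale of the level-$k$ displacement. Here the paper buys a cheaper argument from its construction: at scale $r\approx h_k$ the ball meets two parallel families of lines carrying masses $\approx r$ and $\approx a_k r$ separated by $\approx r$, so for any candidate line $L$ a dichotomy (either $L$ is near the heavy family or it is not) yields $\int(\dist/r)^p\,d\mu/r\gtrsim\min(1,a_k)=a_k$ with no optimization over tilt or offset. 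In your setting the base segment carries mass $\approx r$ in the same ball as the bump, so a real tilt-and-translation trade-off is needed; your computation of the optimal offset $\delta\approx c_k(a_k/r)^{1/(p-1)}$ is the right idea and does close, but it is a heavier argument than the paper's, and you also need to arrange (e.g.\ by taking $r$ a fixed small multiple of $2^{-k}$) that a level-$k$ bump actually lies inside $B(x,r)$, which with spacing $b_k=2^{-k}=r$ is not automatic. With those repairs your route gives an alternative, more self-contained proof that does not rely on Theorem~A for the rectifiability step.
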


So Theorem \ref{teocount2} shows that the validity of the ``if'' direction in Theorem A requires $p\geq2$, while Theorem \ref{teocount3} shows that the other ``only if'' implication fails for $p> 2$ and thus requires $p\leq2$.

To prove Theorems  \ref{teocount2} and \ref{teocount3} we will construct some  Cantor type sets $E$ such that $\HH^1|_E$ is non-doubling. They are
obtained as limits in the Hausdorff distance of other sets $E_k$ made up of finitely many parallel segments
in the plane. It is worth comparing these sets with other counterexamples constructed in \cite{ATT}
in connection with the so-called $\alpha$ coefficients.

\vv
In this work we will also see 
that the purely $1$-unrectifiable set $E$ in Theorem \ref{teocount2} can be constructed so that,
for $1\leq p<2$,
\begin{equation}\label{eq**1''*}
\int_0^1\wt\beta_{\HH^1|_E,p}^1(x,r)^2\,\frac{dr}r<\infty \quad \mbox{for $\HH^1$-a.e.\ $x\in E$,}
\end{equation}
which, a priori, is a stronger condition than \rf{eq**1}, taking into account that $\Theta^{1,*}(x,\HH^1|_E)<\infty$
 for $\HH^1$-a.e.\ $x\in E$. This shows that Theorem A does not hold either if we replace the coefficients
 $\beta_{\mu,2}(x,r)$ by  $\wt\beta_{\mu,p}(x,r)$ for any $p\geq1$ different from $2$.
For more details, see  Theorem \ref{teocount2''} below.

\vv
In the last part of this paper we will provide a new proof of Theorem B. Indeed, we will show that this
can be derived from the results in \cite{Azzam-Tolsa-GAFA} in combination with a careful approximation argument. The techniques are quite different from the ones of Edelen, Naber and Valtorta in \cite{ENV} and use a slight variant of the corona decomposition obtained in
\cite{Azzam-Tolsa-GAFA}. On the other hand, we remark that the work \cite{ENV} contains other more quantitative results about rectifiability and $\beta_2$ numbers, apart from Theorem B. We will not consider these additional results in the current the paper.

\vv


\section{Proof of Theorem \ref{teocount2}}\label{sec2}

To shorten notation we will write $\beta_{\mu,p}(x,r)$ instead of $\beta_{\mu,p}^1(x,r)$.

Given a closed segment $I$ contained in a horizontal line in $\R^2$ and two constants $h\geq0$ and $0<a<1$ and
an integer $n\geq2$,
we denote by $I(h,a,n)$ the set made up of $n$ closed segments $J_1,\ldots, J_n$, of equal length, contained in the parallel segment $I+h\,e_2$ (where $e_2=(0,1)$), with $\sum_{i=1}^n\HH^1(J_i)=a\,\HH^1(I)$, and so that the left endpoint of
$J_1$ coincides with the left endpoint of $I+h\,e_2$, the right endpoint of
$J_n$ coincides with the right endpoint of $I+h\,e_2$, and $\dist(J_i,J_{i+1}) = \frac{1-a}{n-1} \,\HH^1(I)$ for all
$i=1,\ldots,n-1$.

Our set $E$ will be constructed as a limit in the Hausdorff distance of a sequence of compact sets $E_k$,
$k\geq 0$. 
We consider sequences $\{a_k\}_k$, $\{h_k\}_k$, $n_k$, so that $0<a_k,h_k<1$, $n_k>2$. Both $\{a_k\}_k$ and
 $\{h_k\}_k$ converge to $0$, while $n_k$ tends to $\infty$ very quickly.
Each set $E_k$, $k\geq0$, is of the form
$$E_k= \bigcup_{i=1}^{m_k} J_i^k,$$
where  $J_i^k$, $i=1,\ldots,m_k$ is a family of horizontal segments in $\R^2$ (which may be contained in different lines and
may have different lengths). The sets $E_k$ are constructed inductively. We let $E_0=[0,1]\times \{0\}$, and  we construct $E_{k+1}$ from $E_k$ as follows. We denote
$$E_{k+1}^d = \bigcup_{i=1}^{m_k}  J_i^k(0,1-a_{k+1},n_{k+1}), \quad E_{k+1}^u=\bigcup_{i=1}^{m_k}  J_i^k(h_{k+1},a_{k+1},n_{k+1})
,$$
where $J_i^k(h,a,n)$ is the set associated to the segment $J_i^k$ with parameters $h,a,n$ which was defined in the previous paragraph.
Then we set
$$E_{k+1} = E_{k+1}^d \cup E_{k+1}^u$$
(the superindices $d$ and $u$ stand for ``down'' and ``up''). See Figure \ref{figcantor}.
Observe that 
$$E_{k+1}^d\subset E_k\quad \text{and}\quad
E_{k+1}^u\subset E_k + h_{k+1}\,e_2.$$
 Also,
$$\HH^1(E_{k+1}^d) = (1-a_{k+1})\, \HH^1(E_{k})\quad \text{and}\quad \HH^1(E_{k+1}^u) = a_{k+1}\, \HH^1(E_{k}),$$
since $$\HH^1(J_i^k(0,1-a_{k+1},n_{k+1})) = (1-a_{k+1})\,\HH^1(J_i^k)\!\!\quad \text{and}\!\!\quad\HH^1(J_i^k(h_{k+1},a_{k+1},n_{k+1})) = a_{k+1}\,\HH^1(J_i^k)$$ for each $i=1,\ldots,m_k$.
Hence $\HH^1(E_{k+1})=\HH^1(E_{k})$, because the sets $J_i^k(0,1-a_{k+1},n_{k+1})$, $J_{i'}^k(h_{k+1},a_{k+1},n_{k+1})$, are pairwise disjoint (assuming $h_{k+1}$ to be small enough).
\begin{figure}[h]
\begin{center}
\quad
\hspace{5mm}
\begin{minipage}{0.4\textwidth}
\psset{unit=0.65cm}

\begin{pspicture}(11,2.5)
\rput{0}(-0.3,2.2){$E_1$}
  
  \psline[linewidth=1pt](0,0)(2,0)
  \psline[linewidth=1pt](3,0)(5,0)
  \psline[linewidth=1pt](6,0)(8,0)
  
  \psline[linewidth=1pt](0,1.5)(0.67,1.5)
  \psline[linewidth=1pt](3.67,1.5)(4.33,1.5)
  \psline[linewidth=1pt](7.33,1.5)(8,1.5)

 \end{pspicture}

\end{minipage}
\qquad\qquad\quad
\begin{minipage}{0.4\textwidth}
\psset{unit=0.65cm}
\begin{pspicture}(11,2.5)

\rput{0}(-0.3,2.2){$E_2$}
\psset{unit=0.1181818cm}  
  \psline[linewidth=1pt](0,0)(2,0)
  \psline[linewidth=1pt](3,0)(5,0)
  \psline[linewidth=1pt](6,0)(8,0)
  \psline[linewidth=1pt](9,0)(11,0)
  
  \psline[linewidth=1pt](0,1)(0.75,1)
  \psline[linewidth=1pt](3.42,1)(4.17,1)
  \psline[linewidth=1pt](6.83,1)(7.58,1)
  \psline[linewidth=1pt](10.25,1)(11,1)

\psset{unit=0.65cm}    
\rput(3,0){
  \psset{unit=0.1181818cm}  
  \psline[linewidth=1pt](0,0)(2,0)
  \psline[linewidth=1pt](3,0)(5,0)
  \psline[linewidth=1pt](6,0)(8,0)
  \psline[linewidth=1pt](9,0)(11,0)
  
  \psline[linewidth=1pt](0,1)(0.75,1)
  \psline[linewidth=1pt](3.42,1)(4.17,1)
  \psline[linewidth=1pt](6.83,1)(7.58,1)
  \psline[linewidth=1pt](10.25,1)(11,1)
 }

\psset{unit=0.65cm}    
\rput(6,0){
  \psset{unit=0.1181818cm}  
  \psline[linewidth=1pt](0,0)(2,0)
  \psline[linewidth=1pt](3,0)(5,0)
  \psline[linewidth=1pt](6,0)(8,0)
  \psline[linewidth=1pt](9,0)(11,0)
  
  \psline[linewidth=1pt](0,1)(0.75,1)
  \psline[linewidth=1pt](3.42,1)(4.17,1)
  \psline[linewidth=1pt](6.83,1)(7.58,1)
  \psline[linewidth=1pt](10.25,1)(11,1)
 }

\psset{unit=0.65cm}    
\rput(0,1.5){
  \psset{unit=0.0393939cm}  
  \psline[linewidth=1pt](0,0)(2,0)
  \psline[linewidth=1pt](3,0)(5,0)
  \psline[linewidth=1pt](6,0)(8,0)
  \psline[linewidth=1pt](9,0)(11,0)
  
  \psline[linewidth=1pt](0,1)(0.75,1)
  \psline[linewidth=1pt](3.42,1)(4.17,1)
  \psline[linewidth=1pt](6.83,1)(7.58,1)
  \psline[linewidth=1pt](10.25,1)(11,1)
 }

\psset{unit=0.65cm}    
\rput(3.67,1.5){
  \psset{unit=0.0393939cm}  
  \psline[linewidth=1pt](0,0)(2,0)
  \psline[linewidth=1pt](3,0)(5,0)
  \psline[linewidth=1pt](6,0)(8,0)
  \psline[linewidth=1pt](9,0)(11,0)
  
  \psline[linewidth=1pt](0,1)(0.75,1)
  \psline[linewidth=1pt](3.42,1)(4.17,1)
  \psline[linewidth=1pt](6.83,1)(7.58,1)
  \psline[linewidth=1pt](10.25,1)(11,1)
 }

\psset{unit=0.65cm}    
\rput(7.33,1.5){
  \psset{unit=0.0393939cm}  
  \psline[linewidth=1pt](0,0)(2,0)
  \psline[linewidth=1pt](3,0)(5,0)
  \psline[linewidth=1pt](6,0)(8,0)
  \psline[linewidth=1.25pt](9,0)(11,0)
  
  \psline[linewidth=1pt](0,1)(0.75,1)
  \psline[linewidth=1pt](3.42,1)(4.17,1)
  \psline[linewidth=1pt](6.83,1)(7.58,1)
  \psline[linewidth=1pt](10.25,1)(11,1)
 }
 \end{pspicture}

 \end{minipage}
\end{center}

\vspace{4mm}

\caption{The generations $E_1$ and $E_2$ of the Cantor set $E$, with $n_1=3$ and $n_2=4$.}
 \label{figcantor}
\end{figure}
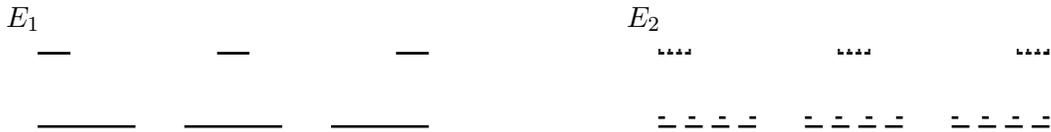

Later we will choose $\{a_k\}_k$ so that $\sum_ka_k^{2/p}<\infty$ but $\sum_ka_k =\infty$. On the other hand, we will take $h_k$ so that $\{h_k\}_k$ converges to $0$ much faster than $\{a_k\}_k$. Further, for convenience we will choose $n_k$ such that $n_k\approx 1/h_k^2$ (for example, we may take
$n_k$ as the smallest integer larger than $1/h_k^2$).
 We also assume that 
\begin{equation}\label{eqhk+1}
h_{k+1}\leq 2^{-2k-5}\,\min\Bigl(h_k,\,\min_{1\leq i \leq m_k}\HH^1(J_i^{k})\Bigr).
\end{equation}
In particular, the condition $h_{k+1}\leq 2^{-2k-5} h_k$ guaranties that $h_{k+1}$ is much smaller than 
the minimal distance among the different horizontal lines that intersect
$\supp\mu_k$ (which equals $h_k$).

We denote $\mu_{k} = \HH^1|_{E_k}$.
Next we will estimate $\beta_{\mu_{k+1},p}(x,r)$ for $x\in\supp\mu_{k+1}$ in terms of $\beta_{\mu_k,p}(x',r+c_1h_{k+1})$,
where $x'$ is the nearest point to $x$ from $\supp\mu_k$ and $c_1$ is some universal constant. That is, by construction, $x'=x$ if 
$x\in E_{k+1}^d$ and
$x'=x-h_{k+1}e_2$ if $x\in E_{k+1}^u$. 
Note first that, for our fixed point $x\in E_k$,
$$\beta_{\mu_{k+1},p}(x,r)=0\quad \mbox{ if  $0<r\leq h_{k+1}$.}$$
In the case $h_{k+1}< r\leq h_k/2$, $B(x,r)$ 
only intersects either one or two lines  from the
family of all lines which contain some segment $J_i^{k+1}$, $i=1,\ldots,m_{k+1}$.  
If it only intersects one line, then $\beta_{\mu_{k+1},p}(x,r)=0$. Otherwise, let us call $L_d$ and $L_u$ the two lines 
which contain some segment $J_i^{k+1}$, $i=1,\ldots,m_{k+1}$ and intersect $B(x,r)$, so that
$L_d$ contains segments which are contained in $E_{k+1}^d$, and $L_u$, segments from $E_{k+1}^u$.
 Further, the distance between $L_d$ and $L_u$ is $h_{k+1}$.
Then we have
\begin{align*}
\beta_{\mu_{k+1},p}(x,r)^p & \leq \frac1{r}\int_{B(x,r)}\left(\frac{\dist(y,L_d)}r\right)^p\,d\mu_{k+1}(y)\\
& = \frac1{r}\int_{B(x,r)\cap L_u}\left(\frac{\dist(y,L_d)}r\right)^p\,d\mu_{k+1}(y)
= \frac{h_{k+1}^p}{r^{p+1}}\,\mu_{k+1}(L_u\cap B(x,r)).
\end{align*}
By construction, it is easy to check that
\begin{equation}\label{eqaux95}
\mu_{k+1}(L_u\cap B(x,r))\lesssim a_{k+1} \,r
\end{equation}
To this end, notice that $n_{k+1}\approx h_{k+1}^{-2}$ and thus $$\HH^1(J^{k+1}_i)
\leq \frac1{n_{k+1}}\,\max_j \HH^1(J^{k}_j)\leq \frac1{n_{k+1}} \approx h_{k+1}^2\ll h_{k+1},$$
 for $k$ big
enough.
From \rf{eqaux95} we derive
$$\beta_{\mu_{k+1},p}(x,r) \lesssim a_{k+1}^{1/p}\,\frac{h_{k+1}}{r}.
$$
Therefore,
\begin{equation}\label{eq**5'}
\int_0^{h_k/2}
\beta_{\mu_{k+1},p}(x,r)^2\,\frac{dr}r \lesssim \int_{h_{k+1}}^{h_k/2} a_{k+1}^{2/p}\,\frac{h_{k+1}^2}{r^2}
\,\frac{dr}r 
\lesssim a_{k+1}^{2/p}\quad\mbox{ for all $x\in\supp\mu_{k+1}$.}
\end{equation}

To deal with the case $r>h_k/2$ we claim that, if  $h_{k+1}$ is small enough, then
\begin{equation}\label{eq*50'}
\beta_{\mu_{k+1},p}(x,r)^p\leq \beta_{\mu_{k},p}(x',r+c_1h_{k+1})^p
+ C\,\frac{h_{k+1}}r,
\end{equation}
for some universal constants $c_1,C$ to be fixed below.
We defer the details to the end of the proof.
Gathering the previous estimates, for any $0<\ve_{k+1}<1/2$, we obtain
\begin{align}\label{eq*5500}
\int_{h_k/2}^\infty
\beta_{\mu_{k+1},p}&(x,r)^2\,\frac{dr}r \\
& \leq 
(1+\ve_{k+1})\int_{h_{k}/2}^\infty
\beta_{\mu_{k},p}(x',r+c_1h_{k+1})^2\,\frac{dr}r + C\,\frac1{\ve_{k+1}} \int_{h_k/2}^\infty\frac{h_{k+1}^{2/p}}{r^{2/p}}\,\frac{dr}r\nonumber\\
& \leq (1+\ve_{k+1})\,\frac{\frac12h_k}{\frac12 h_k -c_1h_{k+1}}\int_0^\infty
\beta_{\mu_{k},p}(x',r)^2\,\frac{dr}r + C\,\frac{h_{k+1}^{2/p}}{\ve_{k+1}\,h_k^{2/p}},
\nonumber
\end{align}
by a change of variables in the last inequality.
Together with \rf{eq**5'}, and using that
$$\frac{\frac12h_k}{\frac12 h_k -c_1h_{k+1}}\leq 1+ C\,\frac{h_{k+1}}{h_k} ,$$
this gives
\begin{equation}\label{eq*55'}
\int_0^\infty
\beta_{\mu_{k+1},p}(x,r)^2\,\frac{dr}r 
\leq C\,a_{k+1}^{2/p}\, + (1+\ve_{k+1})
\biggl(1+ C\,\frac{h_{k+1}}{h_k}\,\biggr)\int_0^\infty
\beta_{\mu_{k},p}(x',r)^2\,\frac{dr}r + C\,\frac{h_{k+1}}{\ve_{k+1}\,h_k}.
\end{equation}
Choosing $\ve_{k+1}=2^{-k}$ and since, by \rf{eqhk+1},
\begin{equation*}
\frac{h_{k+1}}{h_k}\leq 2^{-2k}
\end{equation*}
iterating the estimate \rf{eq*55'}, it follows that
$$\int_0^\infty
\beta_{\mu_{k+1},p}(x,r)^2\,\frac{dr}r \lesssim \sum_{j=1}^{k+1} a_j^{2/p} +\sum_{j=1}^k
\frac{2^j\,h_{j+1}}{h_j} \lesssim 1+ \sum_{j=1}^{k+1} a_j^{2/p}.$$
Since this is uniform on $k$, taking a weak limit and denoting by $\mu$ the corresponding weak limit, we derive
\begin{equation}\label{eqbetap10}
\int_0^\infty
\beta_{\mu,p}(x,r)^2\,\frac{dr}r \lesssim 1 + \sum_{j\geq1} a_j^{2/p} .
\end{equation}

Consider now a sequence $\{a_j\}_j$ such that $\sum_{j\geq1} a_j^{2/p}<\infty$ but so that $\sum_j a_j=\infty$,
such as, for example, $a_j = 1/(2j)$ (recall that $1\leq p<2$).
It is easy to check that $\mu =g\,\HH^1|_E$ for some function $g\approx1$, and so $0<\HH^1(E)<\infty$.
We also postpone the detailed arguments to the end of this section. Thus the condition
\rf{eqbetap10} yields
$$\int_0^\infty
\beta_{\HH^1|_E,p}(x,r)^2\,\frac{dr}r \lesssim 1 + \sum_{j\geq1} a_j^{2/p} <\infty.
$$

It remains to prove that $E=\supp\mu$ is purely unrectifiable.
This is a consequence of the fact that $\sum_j a_j=\infty$. Indeed, 
given $x\in E$, write $x\in U_k$ if the closest point to $x$ from $E_k$ belongs to $E_k^u$, and write 
$x\in D_k$ otherwise. By the second Borel-Cantelli lemma the condition $\sum_j a_j=\infty$ implies
that $x\in U_k$ for infinitely many $k$'s. Note now that, by construction, if $x\in U_k$, then
there exists some segment $J_i^k\subset E_k^u$ such that 
$$\dist(x,J_i^k)\leq \sum_{j\geq k+1} h_j \leq \frac1{10}\,h_k,$$
because of the quick decay of $\{h _k\}$. Then, for $r=h_k/2$ and $k$ big enough, we have
$$\HH^1(B(x,r)\cap E) \leq C\, \mu(B(x,r)\cap E)\leq C\,\mu_k(B(x,1.1r)\cap E)\lesssim C\,a_k\,r.$$
Therefore, if $x\in U_k$ for infinitely many $k$'s, since $a_k\to0$ as $k\to\infty$, then
$$\liminf_{r\to0}\frac{\HH^1(E\cap B(x,r))}{2r}=0.$$
As this happens for $\HH^1$-a.e. $x\in E$, it turns out that $E$ is purely unrectifiable (see
Theorem 17.6 in \cite{Mattila-book}, for example).

\vvv

\vv
\noi{\bf Proof of \rf{eq*50'}}.
Split each segment $J_i^k$, $i=1,\ldots,m_k$, into $n_{k+1}$ segments with disjoint interiors and equal length, and 
denote by $\{I_j^{k+1}\}_{1\leq j\leq m_{k+1}/2}$ the resulting family of segments obtained from such splitting. Let $I_j^{k+1,l}$ be the leftmost closed sub-segment of $I_j^{k+1}$ of length $(1-a_{k+1})\HH^1(I_j^{k+1})$ and let $I_j^{k+1,r}$ be the rightmost half open-closed sub-segment of $I_j^{k+1}$ of length $a_{k+1}\HH^1(I_j^{k+1})$, so that $I_j^{k+1}=I_j^{k+1,l}\cup I_j^{k+1,r}$ and the union is disjoint.

Suppose that the family of segments $\{J_j^{k+1}\}_{1\leq j\leq m_{k+1}}$ is labeled so that the indices
$j=1,\ldots,m_{k+1}/2$ correspond to the subfamily of the segments $J_j^{k+1}$ which are contained in 
$E_{k+1}^d$, and assume also that the labeling is so that, for each $1\leq j\leq m_{k+1}/2$,
$J_j^{k+1}$ is the closest segment (in Hausdorff distance) from $\{J_{j'}^{k+1}\}_{1\leq j'\leq m_{k+1}/2}$
to $I_j^{k+1}$. Also, for $j=1,\ldots,m_{k+1}/2$, given some segment $J_j^{k+1}\subset J_i^k$,
denote by  $J_j^{k+1,u}$ a segment from the family $\{J_{j'}^{k+1}\}_{m_{k+1}/2\leq j'\leq m_{k+1}}$
which is contained in $h_{k+1}e_2+J_i^k$ and is at a distance at most $c\,h_{k+1}$ from  $J_j^{k+1}$,
where $c$ is some absolute constant. 
By our geometric construction, it is easy to check that such choice can be done so that
the segments from $\{J_{j'}^{k+1,u}\}_{1\leq j'\leq m_{k+1}/2}$ are pairwise different (i.e. the correspondence 
 $J_j^{k+1}\mapsto  J_j^{k+1,u}$ is one to one).
 
Now we consider the map $T^{k+1}:\supp\mu_{k}\to\supp\mu_{k+1}$ defined as follows. For each $j=1,\ldots,m_{k+1}/2$ we denote by $T_j^{k+1,l}$ the translation such that $T_j^{k+1,l}(I_j^{k+1,l})=J_j^{k+1}$ and  
by $T_j^{k+1,r}$ the translation such that $T_j^{k+1,r}(I_j^{k+1,r})=(J_j^{k+1,u})$. Now, for each  $j=1,\ldots,m_{k+1}/2$, we set $T^{k+1}(x) = T_j^{k+1,l}(x)$ if $x\in 
I_j^{k+1,l}$, and $T^{k+1}(x) = T_j^{k+1,r}(x)$ if $x\in 
I_j^{k+1,r}$.
Then it is easy to check that, for all $x\in\supp\mu_{k+1}$,
\begin{equation}\label{eqtk+1}
|x-T^{k+1}(x)|\leq C\,h_{k+1},
\end{equation}
and further
\begin{equation}\label{eqtk+2}
T^{k+1}\#\mu_{k} = \mu_{k+1}.
\end{equation}

To estimate $\beta_{\mu_{k+1},p}(x,r)$ for $r\geq h_k/2$, let $L$ be some line minimizing 
$\beta_{\mu_{k},p}(x',r+c_1h_{k+1})$ for some constant $c_1\approx1$ to be fixed below.
Then we have
\begin{align}\label{eqali99'}
\beta_{\mu_{k+1},p}(x,r)^p & \leq \int_{B(x,r)} \left(\frac{\dist(y,L)}r\right)^p\,
\frac{d(T^{k+1}\#\mu_{k})(y)}{r}\\
& = \int_{(T^{k+1})^{-1}(B(x,r))} \left(\frac{\dist(T^{k+1}(y),L)}r\right)^p\,
\frac{d\mu_k(y)}{r}.\nonumber
\end{align}
To deal with the last integral above, we take into account that
$$
\left|\left(\frac{\dist(y,L)}r\right)^p-\left(\frac{\dist(T^{k+1}(y),L)}r\right)^p\right|
\lesssim \frac{|y-T^{k+1}(y)|}r\lesssim\frac{h_{k+1}}r.
$$
Using also that  $\mu_k((T^{k+1})^{-1}(B(x,r)))\leq \mu_k(B(x',r+c_1h_{k+1}))\leq c_2\,r$ for some universal constants $c_1$ and $c_2$ (see Remark \ref{remucreix1} for more details), we obtain
$$\beta_{\mu_{k+1},p}(x,r)^p  \leq \int_{B(x',r+c_1h_k)} \left(\frac{\dist(y,L)}r\right)^p\,
\frac{d\mu_{k}(y)}{r} + c\,\frac{h_{k+1}}r = \beta_{\mu_{k},p}(x',r+c_1h_k)^p+ c\,\frac{h_{k+1}}r,
$$
which proves \rf{eq*50'}.

\vvv

\vv
\noi{\bf Proof of $\mu=g\,\HH^n|_E$ for some $g\approx1$}. First we show that  
$\HH^n|_E\leq \mu|_E$. To this end we consider a family of ``dyadic cubes'' 
$$\DD_E=\{Q^k_j:k\geq 0, 1\leq j\leq m_k\}$$
defined as follows.
For $k\geq 0$ and $1\leq j\leq m_k$, consider a segment $J_j^k$ from the construction of $E$. Then denote by $R^k_j$ the closed rectangle whose bases are $J_j^k$ and $2h_{k+1}e_2+ J_j^k$, and set
$$Q_j^k = R_j^k \cap E.$$
Alternatively, one can think that $Q_j^k$ is the limit in the Hausdorff distance of the set
$$T^{k+i}\circ T^{k+i-1}\circ\ldots\circ T^k(J_j^k)$$
as $i\to\infty$. Observe that, by \rf{eqhk+1}
$$\diam Q_j^k\leq \diam R_j^k\leq 
\HH^1(J_j^k) + 2h_{k+1}\leq (1+2^{-2k-4})\,\HH^1(J_j^k).
$$
Thus, by the covering $Q^k_j = \bigcup_{i:Q^{k+h}_{i}\subset Q^k_j} Q^{k+h}_{i}$ and setting $\ve_h=\max_{i:Q^{k+h}_{i}\subset Q^k_j}\diam(Q^{k+h}_{i})$, it follows that
$$\HH^1_{\ve_h}(Q^k_j) \leq \!\sum_{i:Q^{k+h}_{i}\subset Q^k_j} \!\!\diam(Q^{k+h}_{i})
\leq (1+2^{-2k-2h-4})\!\sum_{i:Q^{k+h}_{i}\subset Q^k_j}\!\!\HH^1(J_i^{k+h}) \leq (1+2^{-2k-2h})\,\HH^1(J_j^k).
$$
So, letting $h\to\infty$,
$\HH^1(Q^k_j) \leq \HH^1(J_j^k) = \mu(Q^k_j).$
Since any relatively open subset $G\subset E$ can be split into a countable disjoint union of cubes from $\DD_E$, one deduces that $\HH^1(G)\leq \mu(G)$. By the regularity of $\HH^1|_E$ and $\mu$, this implies that 
$\HH^1|_E\leq \mu$.

To show that $\mu\leq g\,\HH^n|_E$
for some $g\lesssim1$, it is enough to prove that 
\begin{equation}\label{eqa01}
\mu(A)\leq C\,\diam(A)\quad \mbox{ for any Borel set $A\subset\R^2$.}
\end{equation}
 Indeed, given any subset $F\subset E$, any arbitrary covering $F\subset\bigcup_i A_i$ satisfies
$$\mu(F)\leq \sum_i\mu(A_i)\leq C\sum_i\diam(A_i),$$
which implies that $\mu(F)\leq C\,\HH^1(F)$, by the definition of $\HH^1$.

To prove \rf{eqa01}, let $k$ be such that
$\frac12 h_{k+1}\leq \diam A<\frac12h_k$. Denote by $\{L^k_i\}_{1\leq i \leq 2^k}$ the family of lines
which contain some segment from the family $\{J_j^k\}_{1\leq j\leq m_k}$, and recall
that the distance between two different lines $L_i^k$, $L_{i'}^k$ is at least $h_k$.
Also,  it is easy to check that, by construction, $\mu_k|_{L_i^k} \leq \HH^1|_{L_i^k}$ for each $i$.
Therefore, any set $A'$ intersecting at most one of such lines satisfies $\mu_k(A')\leq\diam(A')$.
Recall now, that for any $j$,
$$\mu_{j+1}(A) = T^{j+1}\#\mu_j(A) = \mu_j((T^{j+1})^{-1}(A)) \leq \mu_j(U_{C\,h_{j+1}}(A)),$$
taking into account \rf{eqtk+2} and \rf{eqtk+1}, and denoting by $U_{t}(A)$ the $t$-neighborhood of $A$.
Iterating the preceding estimate, for $j\geq k$ we get
$$\mu_j(A)\leq \mu_{j-1}(U_{C\,h_{j}}(A))\leq \ldots \leq\mu_k(U_{C\,h_{j}+\ldots +C\,h_{k+1} }(A))
\leq \mu_k(U_{C'\,h_{k+1} }(A)),$$
taking also into account the fast decay of the sequence $\{h_k\}_k$, by \rf{eqhk+1}.
Since the set $A':=U_{C'\,h_{k+1} }(A)$ intersects at most one line $L^k_i$ (assuming $k$ big enough), we deduce that
\begin{equation}\label{eqmujr}
\mu_j(A)\leq \diam(U_{C'\,h_{k+1} }(A)) \leq \diam(A)+2C'\,h_{k+1}\leq C\,\diam(A)
\end{equation}
for all $j\geq k$.
Letting $j\to\infty$, we infer that any set $A\subset\R^2$ satisfies
$\mu(A)\leq C\,\diam(A)$ as wished.\footnote{By a more careful argument, one can show that 
$\mu(A)\leq \diam(A)$ for any Borel set $A\subset\R^2$, which implies that $\mu=\HH^1|_E$.}

\vv
\begin{remark}\label{remucreix1}
The arguments above also show that
\begin{equation}\label{eqmujr2}
\mu_j(A)\leq C\,\diam(A)\quad\mbox{ for any Borel set $A\subset\R^2$.}
\end{equation}
Indeed, \rf{eqmujr} shows that this holds if $\frac12 h_{k+1}\leq \diam A<\frac12h_k$ for
some $k\leq j$. In the case $\diam A<\frac12 h_{j+1}$, then $A$ intersect at most one line
$L_i^j$ and so \rf{eqmujr2} also holds.
\end{remark}

\vv


\section{The counterexample involving the $\wt\beta_p$ coefficients}\label{sec3}

In this section we will prove the following.

\begin{theorem}\label{teocount2''}
There exists a set $E\subset\R^2$ such that $0<\HH^1(E)<\infty$, which is purely $1$-unrectifiable, and so that,
for $1\leq p<2$,
\begin{equation}\label{eq**1''}
\int_0^1\wt\beta_{\HH^1|_E,p}^1(x,r)^2\,\frac{dr}r<\infty \quad \mbox{for $\HH^1$-a.e.\ $x\in E$.}
\end{equation}
\end{theorem}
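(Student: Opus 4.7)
The plan is to use the identical Cantor-type construction from Section \ref{sec2}, so that the set $E$ and the measure $\mu=\HH^1|_E$ coincide with those from the proof of Theorem \ref{teocount2}; in particular $E$ is purely $1$-unrectifiable. What remains is to upgrade the recursive $\beta_p$-estimate of Section \ref{sec2} for the intermediate measures $\mu_k=\HH^1|_{E_k}$ to an analogous estimate for $\wt\beta_p$.

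The key observation is a density cancellation that makes the $\wt\beta_p$-bound \emph{independent} of the up/down history of $x$. For $x\in\supp\mu_{k+1}$ and $r\in(h_{k+1},h_k/2)$, the ball $B(x,r)$ meets two parallel horizontal lines $L_d,L_u$ of $\supp\mu_{k+1}$ at distance $h_{k+1}$, both descending from a common level-$k$ parent segment whose horizontal extent contains the projection of $x$. Writing $\rho_x$ for the averaged $\mu_{k+1}$-density on the parent's line at scale $r$ (a product of factors $a_j$ or $1-a_j$ determined by the up/down history of $x$ at levels $\leq k$), the level-$(k+1)$ splitting yields
\[
\mu_{k+1}(L_d\cap B(x,r))\approx(1-a_{k+1})\rho_x\cdot 2r,\qquad \mu_{k+1}(L_u\cap B(x,r))\approx a_{k+1}\rho_x\cdot 2r,
\]
so that $\rho_x$ cancels in
\[
\wt\beta_{\mu_{k+1},p}(x,r)^p\leq\frac{h_{k+1}^p}{r^p}\cdot\frac{\mu_{k+1}(L_u\cap B(x,r))}{\mu_{k+1}(B(x,r))}\lesssim a_{k+1}\,\frac{h_{k+1}^p}{r^p},
\]
giving $\wt\beta_{\mu_{k+1},p}(x,r)\lesssim a_{k+1}^{1/p}h_{k+1}/r$, the same bound as in \eqref{eq**5'}, uniformly in $x\in\supp\mu_{k+1}$. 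The cancellation is essential: in sparse regions (small $\rho_x$) both the relevant $\beta_p$-numerator and $\mu_{k+1}(B(x,r))$ are of order $\rho_x r$, so the unnormalized $\beta_{\mu_{k+1},p}$ would instead carry an extra factor $\rho_x^{1/p}$ which is not uniform.

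For $r\geq h_k/2$ the transport-map argument of Section \ref{sec2} combined with \eqref{eqtk+1}--\eqref{eqtk+2} gives
\[
\wt\beta_{\mu_{k+1},p}(x,r)^p\leq(1+Ch_{k+1}/r)\,\wt\beta_{\mu_k,p}(x',r+c_1h_{k+1})^p+C\,\frac{h_{k+1}}{r},
\]
with the ratio $\mu_k(B(x',r+c_1h_{k+1}))/\mu_{k+1}(B(x,r))=1+O(h_{k+1}/r)$ by the same $\rho$-cancellation applied at the coarser scale (the densities on $B(x',r+c_1h_{k+1})$ and $B(x,r)$ inherit the same parent factor). Iterating exactly as in \eqref{eq*5500}--\eqref{eqbetap10} with $\ve_{k+1}=2^{-k}$ and using the rapid decay \eqref{eqhk+1} yields the uniform bound $\int_0^\infty\wt\beta_{\mu_k,p}(x,r)^2\,dr/r\lesssim 1+\sum_{j\geq 1}a_j^{2/p}$; with $a_j=1/(2j)$ this sum is finite for $1\leq p<2$, and weak convergence $\mu_k\to\mu$ together with a lower-semicontinuity argument transfers the bound to the limit measure $\mu$, establishing \eqref{eq**1''}.

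The main obstacle is rigorously establishing the density cancellation: one must identify the common level-$k$ parent of $L_d$ and $L_u$ whose horizontal extent contains $B(x,r)$ and verify that the same $\rho_x$ factor appears on both lines. When $r$ is close to $h_k/2$ and $B(x,r)$ straddles two adjacent level-$k$ parents on the same pair of lines, the argument must be applied piecewise and summed; nonetheless the resulting ratio $\mu_{k+1}(L_u\cap B(x,r))/\mu_{k+1}(B(x,r))$ remains $\lesssim a_{k+1}$ because each parent contributes to $L_u$ and $L_d$ in the ratio $a_{k+1}:(1-a_{k+1})$ regardless of its own $\rho$-history. A secondary delicate point is controlling the density ratio in the recursive inequality uniformly across levels, which relies on combining the quasi-isometry $|T^{k+1}(y)-y|\leq Ch_{k+1}$ with Remark \ref{remucreix1} and its analogue for the inverse transport.
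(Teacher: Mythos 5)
The claimed ``density cancellation'' giving the uniform pointwise bound $\wt\beta_{\mu_{k+1},p}(x,r)\lesssim a_{k+1}^{1/p}\,h_{k+1}/r$ for all $x\in\supp\mu_{k+1}$ and $h_{k+1}<r\leq h_k/2$ is false. The issue is the geometry of the intersection of a ball with the two parallel lines, which you ignore. When $x\in L_u$ and $r$ is close to $h_{k+1}$, one has $\HH^1\bigl(B(x,r)\cap L_d\bigr)=2\sqrt{r^2-h_{k+1}^2}\ll 2r$, so your assertion $\mu_{k+1}(L_d\cap B(x,r))\approx(1-a_{k+1})\rho_x\cdot 2r$ fails; the correct order is $(1-a_{k+1})\cdot 2\sqrt{r^2-h_{k+1}^2}$. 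Then $\mu_{k+1}(B(x,r))\approx a_{k+1}r+(1-a_{k+1})\sqrt{r^2-h_{k+1}^2}$ and the ratio $\mu_{k+1}(L_u\cap B(x,r))/\mu_{k+1}(B(x,r))$ tends to $1$, not to $a_{k+1}$, as $r\to h_{k+1}^+$. Consequently $\wt\beta_{\mu_{k+1},p}(x,r)$ can be of order $1$ in a thin band of radii just above $h_{k+1}$, contradicting your pointwise estimate. (Incidentally, $\rho_x=1$ in this construction since $\mu_k=\HH^1|_{E_k}$ has unit density on each $J_i^k$, so the extra normalization factor you introduce plays no role.)

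The paper's proof meets this head-on: it proves the lower bound $\mu_{k+1}(B(x,r))\gtrsim\sqrt{r^2-h_{k+1}^2}$ for $r\geq h_{k+1}+s_{k+1}$ (with $s_{k+1}\approx h_{k+1}^2$ the maximal gap), obtains $\wt\beta_{\mu_{k+1},p}(x,r)^2\lesssim a_{k+1}^{2/p}(h_{k+1}/r)^2\bigl(r/(r-h_{k+1})\bigr)^{1/p}$, and splits the $dr/r$ integral into $I_1$ over $(h_{k+1},h_{k+1}+s_{k+1})$, where $\wt\beta\leq 1$ but the logarithmic length of the band is only $\approx s_{k+1}/h_{k+1}\approx h_{k+1}$, and $I_2$ over the remaining range, where the singular factor $\bigl(r/(r-h_{k+1})\bigr)^{1/p}$ is integrable near $r=h_{k+1}$ precisely because $p>1$ (the case $p=1$ is reduced to $p>1$ at the outset). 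Your final answer $\lesssim a_{k+1}^{2/p}$ happens to be correct, but it is obtained from a false intermediate bound. A secondary inaccuracy: in the recursive step you assert the ratio $\mu_k(B(x',r+c_1h_{k+1}))/\mu_{k+1}(B(x,r))=1+O(h_{k+1}/r)$, but the paper obtains $1+C_k(h_{k+1}/r)^{1/2}$ with a $k$-dependent constant $C_k\sim 2^k/c_k$, because a line meets an annulus of width $\approx h_{k+1}$ around radius $r$ in arcs of total length up to $\approx\sqrt{rh_{k+1}}$. The square-root exponent and the growth of $C_k$ are why the rapid decay of $h_{k+1}$ (beyond \eqref{eqhk+1}) must be chosen \emph{depending on} $C_k$; your sketch glosses over exactly the part that makes the iteration work.
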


Let us remark that, for sets $E\subset\R^2$ such that $0<\HH^1(E)<\infty$,  the condition \rf{eq**1''} implies
\rf{eq**1}, and thus Theorem \ref{teocount2} is implied by the theorem above.
However, we have preferred to prove first Theorem \ref{teocount2} separately because its proof is a little more transparent and
less technical than the one of Theorem \ref{teocount2''}

\begin{proof}
To shorten notation we write
 $\wt\beta_{\mu,p}(x,r)$ instead of $\wt\beta_{\mu,p}^1(x,r)$.
 
We consider exactly the same set $E$ constructed in the previous section, and we use the same notation.
We also choose $a_k=1/(2k)$ and $h_k$ as in \rf{eqhk+1}, and also $n_k\approx 1/h_k^2$.
We have already shown that $0<\HH^1(E)<\infty$ and that $E$ is purely $1$-rectifiable, and thus 
we just have to show that
\rf{eq**1''} holds for $1\leq p<2$ if $h_k$ decreases fast enough as $k\to\infty$ (besides satisfying
\rf{eqhk+1}).
Further, we may assume that $1<p<2$ because $\wt\beta_{\HH^1|_E,1}(x,r)\lesssim
\wt\beta_{\HH^1|_E,p}(x,r)$ for such $p$'s.

To prove \rf{eq**1''} we will follow some arguments quite similar to the ones 
 in the preceding section.
Clearly, for any $x\in E_k$,
$$\wt\beta_{\mu_{k+1},p}(x,r)=0\quad \mbox{ if $0<r\leq h_{k+1}$.}$$
In the case $h_{k+1}< r\leq h_k/2$, $B(x,r)$ (still for $x\in E_k$) only intersects either one or two lines  from the
family of all lines which contain some segment $J_i^{k+1}$, $i=1,\ldots,m_{k+1}$.  
If it only intersects one line, then $\wt\beta_{\mu_{k+1},p}(x,r)=0$. Otherwise, let we call $L_d$ and $L_u$ the two lines 
which contain some segment $J_i^{k+1}$, $i=1,\ldots,m_{k+1}$ and intersect $B(x,r)$, so that
$L_d$ contains segments which are contained in $E_{k+1}^d$, and $L_u$, segments from $E_{k+1}^u$.
Then we have
\begin{align*}
\wt\beta_{\mu_{k+1},p}(x,r)^p & \leq \frac1{\mu_{k+1}(B(x,r))}\int_{B(x,r)}\left(\frac{\dist(y,L_d)}r\right)^p\,d\mu_{k+1}(y)\\
& 
= \frac{h_{k+1}^p}{r^{p}}\,\frac{\mu_{k+1}(L_u\cap B(x,r))}{\mu_{k+1}(B(x,r))}.
\end{align*}
Then, from \rf{eqaux95} it follows that
\begin{equation}\label{eqbeta94}
\wt\beta_{\mu_{k+1},p}(x,r) \lesssim a_{k+1}^{1/p}\,\frac{h_{k+1}}{r}\,\left(\frac r{{\mu_{k+1}(B(x,r))}}\right)^{1/p}.
\end{equation}

Now we need to estimate $\mu_{k+1}(B(x,r))$ from below, still assuming that $h_{k+1}< r\leq h_k/2$. It is easy to check that
\begin{equation}\label{eqsk1001}
\mu_{k+1}(B(x,r))\gtrsim \HH^1(B(x,r)\cap L_d) \approx r\quad\mbox{ if  $x\in L_d$.}
\end{equation}
In the  case $x\in L_u$ we write
$$
\mu_{k+1}(B(x,r))\geq \mu_{k+1}(B(x,r)\cap L_d).
$$
Observe that 
$$\HH^1(B(x,r)\cap L_d) = 2\sqrt{r^2-h_{k+1}^2}.$$
Recall that, by construction, each of the segments $J_i^{k+1}$, $i=1,\ldots,m_{k+1}$, which are contained in $L_u$ is also contained in a set 
$
J_j^k(0,1-a_{k+1},n_{k+1})$ for some $j\in[1,m_k]$. 
Denote 
$$s_{k+1} = \max_i \,\dist(J_i^{k+1}, E_{k+1}\setminus  J_i^{k+1}).$$
Clearly, by construction,
\begin{equation}\label{eqsk+1}
s_{k+1}\leq \frac1{n_{k+1}-1}\,\max_i\,\HH^1(J_i^{k+1})\leq
\frac1{n_{k+1}-1}
\approx h_{k+1}^2.
\end{equation}
It follows easily that if $\HH^1(B(x,r)\cap L_d)\geq 2\,s_{k+1}$ 
(or equivalently, $r^2\geq h_{k+1}^2 + s_{k+1}^2$), then
$$\mu_{k+1}(B(x,r)\cap L_d)\approx\HH^1(B(x,r)\cap L_d) =2 \sqrt{r^2-h_{k+1}^2}.$$
Hence, for $h_{k+1} + s_{k+1}\leq r\leq h_k/2$ we have
$$\mu_{k+1}(B(x,r))\gtrsim \sqrt{r^2-h_{k+1}^2}.$$
By \rf{eqsk1001} this estimate also holds for $x\in L_d$. Together with \rf{eqbeta94}, this implies
that for all $x\in E_{k+1}$ and 
 $h_{k+1} + s_{k+1}\leq r\leq h_k/2$
we have
$$\wt\beta_{\mu_{k+1},p}(x,r)^2 \lesssim a_{k+1}^{2/p}\,\frac{h_{k+1}^2}{r^2}\,\left(\frac {r^2}{r^2-h_{k+1}^2}\right)^{1/p}\!\approx \,a_{k+1}^{2/p}\,\frac{h_{k+1}^2}{r^2}\,\left(\frac {r}{r-h_{k+1}}\right)^{1/p}.
$$

From the preceding estimate we deduce
\begin{align*}
\int_0^{h_k/2}
\wt\beta_{\mu_{k+1},p}(x,r)^2\,\frac{dr}r & \lesssim 
\int_{h_{k+1}}^{h_{k+1} + s_{k+1}} 
\,\frac{dr}r +
\int_{h_{k+1}+s_{k+1}}^{h_k/2} 
a_{k+1}^{2/p}\,\frac{h_{k+1}^2}{r^2}\,\left(\frac {r}{r-h_{k+1}}\right)^{1/p}
\,\frac{dr}r \\ 
\mbox{}\\
& =: I_1 + I_2.
\end{align*}
Note that, by \rf{eqsk+1},
$$I_1 = \log\frac{h_{k+1} + s_{k+1}}{h_{k+1}} \approx \frac{s_{k+1}}{h_{k+1}}\lesssim h_{k+1}.$$
Concerning $I_2$, we write
\begin{align*}
I_2 & \leq \int_{h_{k+1}}^{h_k/2} a_{k+1}^{2/p} 
\,\frac{h_{k+1}^2}{r^2}\,\left(\frac {r}{r-h_{k+1}}\right)^{1/p}
\,\frac{dr}r \\
& \leq\int_{h_{k+1}}^{2h_{k+1}} \cdots \;\,+ \int_{2h_{k+1}}^\infty \cdots\\
& \lesssim a_{k+1}^{2/p}\,h_{k+1}^{1/p-1} \int_{h_{k+1}}^{2h_{k+1}}\frac {1}{(r-h_{k+1})^{1/p}}\,dr + a_{k+1}^{2/p} \,h_{k+1}^2
\int_{2h_{k+1}}^\infty 
\,\frac{1}{r^2}\,
\,\frac{dr}r\\
&\approx a_{k+1}^{2/p},
\end{align*}
using the fact that  $p>1$ to estimate the first integral in the before to last line.
So we have
\begin{equation}\label{eqbeta743}
\int_0^{h_k/2}
\wt\beta_{\mu_{k+1},p}(x,r)^2\,\frac{dr}r \lesssim h_{k+1} + a_{k+1}^{2/p}\approx a_{k+1}^{2/p},
\end{equation}
assuming that $h_{k+1} \ll a_{k+1}^{2/p}$.
\vv

For $r>h_k/2$ and $x\in\supp\mu_{k+1}$ we will estimate $\wt\beta_{\mu_{k+1},p}(x,r)$ in terms of $\wt\beta_{\mu_k,p}(x',r+c_1h_{k+1})$,
where $x'$ is again the nearest point to $x$ from $\supp\mu_k$ and $c_1$ is some universal constant. 
By \rf{eq*50'}, denoting $r'= r+c_1h_{k+1}$, we have
\begin{align*}
\wt\beta_{\mu_{k+1},p}(x,r)^p & = \frac r{\mu_{k+1}(B(x,r))}\, \beta_{\mu_{k+1},p}(x,r)^p \\
& \leq \frac r{\mu_{k+1}(B(x,r))}\,\biggl(
\beta_{\mu_{k},p}(x',r')^p + C\,\frac{h_{k+1}}r\biggr)\\
& = \frac r{\mu_{k+1}(B(x,r))}\,\biggl(\frac{\mu_k(B(x',r'))}{r'}\,
\wt\beta_{\mu_{k},p}(x',r')^p + C\,\frac{h_{k+1}}r\biggr).
\end{align*}
Observe that 
\begin{equation}\label{eqsh3}
\mu_{k+1}(B(x,r)) = \mu_k((T^{k+1})^{-1}(B(x,r))) \geq  \mu_k(B(x',r-c_3h_{k+1}))\geq \mu_k(B(x',r/2))\geq c_k \,r,
\end{equation}
where $c_3$ is some universal constant and $c_k$ is some constant depending on the parameters
$k,a_1,\ldots,a_k,h_1,\ldots,h_k,n_1,\ldots,n_k$ (probably the estimate \rf{eqsh3}
can be sharpened, but this is enough for us).
So taking also into account that $r'>r$, we derive
\begin{equation}\label{eqbeta574}
\wt\beta_{\mu_{k+1},p}(x,r)^p\leq \frac{\mu_k(B(x',r'))}{\mu_{k+1}(B(x,r))}\,
\wt\beta_{\mu_{k},p}(x',r')^p + C\,c_k^{-1}\,\frac{h_{k+1}}r.
\end{equation}
We have
\begin{align*}
\left|1- \frac{\mu_k(B(x',r'))}{\mu_{k+1}(B(x,r))}\right| & = 
\frac{\bigl|\mu_k(B(x',r'))-\mu_{k+1}(B(x,r))\bigr|}{\mu_{k+1}(B(x,r))}
\\
& = 
\frac{\bigl|\mu_k(B(x',r'))-\mu_{k}((T^{k+1})^{-1}(B(x,r)))\bigr|}{\mu_{k+1}(B(x,r))}\\
&\leq \frac{\mu_k(A(x',r-c_4\,h_{k+1},r+c_4\,h_{k+1}))}{c_k\,r}
\end{align*}
for some universal constant $c_4$.
To estimate $\mu_k(A(x',r-c_4\,h_{k+1},r+c_4\,h_{k+1}))$ we take into account that 
$E_k$ is contained in the union of $2^k$ horizontal lines, and we use the brutal inequality
$$\mu_k(A(x',r-c_4\,h_{k+1},r+c_4\,h_{k+1}))\leq 2^k\,\sup_L\,\HH^1(A(x',r-c_4\,h_{k+1},r+c_4\,h_{k+1})\cap
L),$$
where the supremum is taken over all lines $L$. One can easily to check that
$$\sup_L\,\HH^1(A(x',r-c_4\,h_{k+1},r+c_4\,h_{k+1})\cap
L) = \sqrt{( r+c_4\,h_{k+1})^2 - ( r-c_4\,h_{k+1})^2} = \sqrt{2c_4\,r\,h_{k+1}}.$$ 
Therefore,
$$\left|1- \frac{\mu_k(B(x',r'))}{\mu_{k+1}(B(x,r))}\right|\leq 
\frac{2^k\,\sqrt{2c_4\,r\,h_{k+1}}}{c_k\,r} =: C_k\,\left(\frac{h_{k+1}}r\right)^{1/2}.$$
Together with \rf{eqbeta574}, this gives
\begin{align*}
\wt\beta_{\mu_{k+1},p}(x,r)^p &\leq \left(1+C_k\,\frac{h_{k+1}^{1/2}}{r^{1/2}}\right)\,
\wt\beta_{\mu_{k},p}(x',r')^p + C\,c_k^{-1}\,\frac{h_{k+1}}r\\
&\leq 
\wt\beta_{\mu_{k},p}(x',r')^p + C\,C_k\,\left(\frac{h_{k+1}}r\right)^{1/2} + C\,c_k^{-1}\,\frac{h_{k+1}}r\\
&\leq 
\wt\beta_{\mu_{k},p}(x',r')^p + \wt C_k\,\left(\frac{h_{k+1}}r\right)^{1/2},
\end{align*}
which should be compared with \rf{eq*50'}.
Arguing now as in  \rf{eq*5500}, \rf{eq*55'}, \rf{eqbetap10}, 
if we take $h_{k+1}$ small enough (depending on $\wt C_k$), by iterating the estimate above, we obtain
$$
\int_0^\infty
\wt\beta_{\mu,p}(x,r)^2\,\frac{dr}r \lesssim 1 + \sum_{j\geq1} a_j^{2/p}<\infty,
$$
recalling that $a_j=1/(2j)$.
We leave the details for the reader.
\end{proof}

\vv


\section{Proof of Theorem \ref{teocount3}}

We consider the same construction as in Sections \ref{sec2} and \ref{sec3} for the proofs of Theorems \ref{teocount2} and \ref{teocount2''}, respectively, and we use the same notation.
However, now we choose 
$$a_j = \frac1{j\,(\log (e+j))^2}.$$
In this way, by the estimate \rf{eqbetap10} with $p=2$, for any $x\in E$,
$$\int_0^\infty
\beta_{\mu,2}(x,r)^2\,\frac{dr}r \lesssim 1 + \sum_{j\geq1} a_j<\infty,$$
and so $E$ is $n$-rectifiable, by Theorem A.

We will show now that, for all $p>2$, 
$$\int_0^\infty
\beta_{\mu,p}(x,r)^2\,\frac{dr}r=\infty\quad \mbox{ for all $x\in E$.}$$
To this end, consider a ball $B(x,r)$, with $x\in E$ and $r$ such that $2h_{k}\leq r\leq 4h_k$.
Let  $x'\in E_k$ be the closest point to $x$ from $E_k$.
By construction $B(x',\frac32r)$ intersects two lines $L^d$ and $L^u$ which contain segments $J_i^k$ from
$E_k^d$ and $E_k^u$ respectively, so that moreover,
$$\mu_k(L^d\cap B(x',\tfrac32r))\gtrsim (1-a_k)\,r\approx r,$$
and 
$$\mu_k(L^u\cap B(x',\tfrac32r))\gtrsim a_k\,r.$$
Consider an arbitrary line $L\subset\R^2$ and denote $B=B(x',\tfrac32r)$. If $\dist_H(L\cap B,L^d\cap B)\leq \frac1{100}\,h_k$, then one can easily check that
$\dist_H(L\cap B,L^u\cap B)\gtrsim h_k\approx r$, and then it easily follows that
$$\int_{B(x',\tfrac32 r)}\left(\frac{\dist(y,L)}r\right)^p\,\frac{d\mu_k(y)}{r} \gtrsim 
\frac{\mu_k(B(x',\tfrac32r\cap L^u))}r\approx a_k.$$
Since $h_{k+1}\ll h_k$, it is easy to check that in fact we also have
$$\int_{B(x',\tfrac32 r)}\left(\frac{\dist(y,L)}r\right)^p\,\frac{d\mu_j(y)}{r} \gtrsim 
\frac{\mu_k(B(x',\tfrac32r\cap L^u))}r\approx a_k$$
uniformly for all $j\geq k$, with $k$ big enough.
Hence, by taking a weak limit, 
$$\int_{B(x,r)}\left(\frac{\dist(y,L)}r\right)^p\,\frac{d\mu(y)}{r} \gtrsim 
 a_k.$$
On the other hand,
if $\dist_H(L\cap B,L^d\cap B)> \frac1{100}\,h_k$, 
then it easily follows that
$$\int_{B(x',\tfrac32 r)}\left(\frac{\dist(y,L)}r\right)^p\,\frac{d\mu_k(y)}{r} \gtrsim 
\frac{\mu_k(B(x',\tfrac32r\cap L^d))}r\approx 1-a_k\approx 1.$$
Since $h_{k+1}\ll h_k$, then we also have
$$\int_{B(x',\tfrac32 r)}\left(\frac{\dist(y,L)}r\right)^p\,\frac{d\mu_j(y)}{r} \gtrsim 
1$$
uniformly for all $j\geq k$ with $k$ big enough, and then letting $j\to\infty$,
$$\int_{B(x,r)}\left(\frac{\dist(y,L)}r\right)^p\,\frac{d\mu(y)}{r} \gtrsim 1.$$

So in any case, for any line $L$ we have
$$\int_{B(x,r)}\left(\frac{\dist(y,L)}r\right)^p\,\frac{d\mu(y)}{r} \gtrsim \min(1,a_k)=a_k,$$
and thus
$$\beta_{\mu,p}(x,r) \gtrsim a_k^{1/p}\quad \mbox{ for $2h_k\leq r\leq 4h_k$.}$$
Therefore, for $p>2$,
$$\int_0^\infty
\beta_{\mu,p}(x,r)^2\,\frac{dr}r\geq\sum_{k=1}^\infty \int_{2h_k}^{4h_k} a_k^{2/p}\,\frac{dr}r
\approx \sum_{k=1}^\infty  \frac1{j^{2/p}\,(\log (e+j))^{4/p}} = \infty,$$
which concludes the proof of Theorem \ref{teocount3}.

\vv


\section{Rectifiability of measures with bounded lower density}

In this section we will deduce Theorem B of Edelen, Naber and Valtorta from the corona decomposition in \cite{Azzam-Tolsa-GAFA} and a suitable approximation argument.


\vv

\subsection{The dyadic lattice and the corona decomposition from \cite{Azzam-Tolsa-GAFA}}\label{sec41}
We recall that one of the main ingredients of the proof of Theorem A in \cite{Azzam-Tolsa-GAFA} is 
a corona decomposition in terms of  a dyadic lattice $\DD_\mu$ associated to the measure $\mu$, which we assume to be compactly supported. We have $\DD_\mu = \bigcup_{k\geq k_0} \DD_{\mu,k}$, and each family 
$\DD_{\mu,k}$ consists of a collection of Borel subsets  (or ``cubes'') of $E=\supp\mu$ which form a partition of $E$.
That is, for each $k\geq k_0$,
$$E = \bigcup_{Q\in \DD_{\mu,k}} Q,$$
and the union is disjoint.  Further,
if $k<l$, $Q\in\DD_{\mu,l}$, and $R\in\DD_{\mu,k}$, then either $Q\cap R=\varnothing$ or else $Q\subset R$.

The general position of the cubes $Q$ can be described as follows. There are constants $A_0,C_0\gg1$ so that for each $k\geq k_0$ and each cube $Q\in\DD_{\mu,k}$, there is a ball $B(Q)=B(z_Q,r(Q))$ such that
$$z_Q\in E, \qquad A_0^{-k}\leq r(Q)\leq C_0\,A_0^{-k},$$
$$E\cap B(Q)\subset Q\subset E\cap 28\,B(Q)=E \cap B(z_Q,28r(Q)),$$
and
$$\mbox{the balls $5B(Q)$, $Q\in\DD_{\mu,k}$, are disjoint.}$$
For other additional properties of this lattice (constructed by David and Mattila in \cite{David-Mattila}) see Lemma 2.1 from \cite{Azzam-Tolsa-GAFA}.

We set
$\ell(Q)= 56\,C_0\,A_0^{-k}$ and we call it the side length of $Q$. Note that 
$$\frac1{28}\,C_0^{-1}\ell(Q)\leq \diam(B(Q))\leq\ell(Q).$$
 We also denote
$B_Q=28 \,B(Q)=B(z_Q,28\,r(Q))$, so that 
$$E\cap \tfrac1{28}B_Q\subset Q\subset B_Q.$$

\vv

The corona decomposition  from \cite{Azzam-Tolsa-GAFA} is a partition of $\DD_\mu$ into tree-like families
whose family of associated roots is denoted by $\ttt_\mu$. The only properties of this corona decomposition that here we need to know here are the following:
\begin{enumerate}
\item $\ttt_\mu \subset \DD_\mu$, $E\in \ttt_\mu$, and each $R\in\ttt_\mu$ satisfies
$$\mu(2B_R)\leq C\,\mu(R).$$

\item For $R\in\ttt_\mu$, let $\TT(R)$ be the subfamily of cubes from $\DD_\mu$ which are contained in $R$
and which are not contained in any other cube from $\ttt_\mu$. Then
$$\DD_\mu =\bigcup_{R\in\ttt_\mu}\TT(R),$$
and the union is disjoint.

\item For each $R\in\ttt_\mu$ and each $Q\in\TT(R)$,
$$\Theta_\mu(2B_Q)\leq C\,\Theta_\mu(2B_R),$$
where $\Theta_\mu(B(x,r))=\frac{\mu(B(x,r))}{r^n}$.

\item If $$\mu\biggl(R\setminus \bigcup_{Q\in \TT(R)} Q\biggr)>0,$$ then $\TT(R)$ contains cubes $Q\in\DD_\mu$ of arbitrarily small side length satisfying  $\Theta_\mu(2B_R)\approx \Theta_\mu(2B_Q)$.

\item If $\mu$ satisfies the growth condition
\begin{equation}\label{eqgrowth00}
\mu(B(x,r))\leq C_*\,r^n\quad \mbox{ for all $x\in E$, $r>0$,}
\end {equation}
then $\ttt_\mu$ satisfies the packing condition
\begin{equation}\label{eqpack00}
\sum_{R\in\ttt_\mu}\Theta_\mu(2B_R)\,\mu(R)\leq C\,C_*\,\mu(R_0)+
C\,\int\!\!\int_0^\infty\beta_{\mu,2}(x,r)^2\,\frac{dr}r\,d\mu(x).
\end{equation}
\end{enumerate}

The properties above are proved in Section 5 from \cite{Azzam-Tolsa-GAFA}. We also remark that another
key property is the fact that, in  a sense, the measure $\mu$ is quite well approximated 
by $n$-dimensional Hausdorff measure in a Lipschitz $n$-dimensional manifold
at the scales and locations of
each tree $\TT(R)$. However, this property will not be used here and so we skip the details.

\vv
In \cite{Azzam-Tolsa-GAFA} the growth condition \rf{eqgrowth00} is only used to prove the packing condition
\rf{eqpack00}. Indeed this is not used in connection with the other 
properties of the corona decomposition listed above.

We claim now that the packing condition \rf{eqpack00} also holds if instead of \rf{eqgrowth00} we just assume that there exists some $r_0>0$ such that
\begin{equation}\label{eqgrowth00'}
\mu(B(x,r))\leq C_*\,r^n\quad \mbox{ for all $x\in E$, $0< r\leq r_0$,}
\end {equation}
with the constants in \rf{eqpack00} independent of $r_0$.
The only required modifications are located in the proof of Lemma 5.5 from \cite{Azzam-Tolsa-GAFA}.
They are quite minor and we just sketch them, and advise the reader to have \cite{Azzam-Tolsa-GAFA} at hand
to follow the details:
\begin{itemize}
\item Equation (5.9) from \cite{Azzam-Tolsa-GAFA} is still valid under the assumption \rf{eqgrowth00'},
because for $k$ big enough, $\Theta_\mu(2B_R)\leq C_*$ for all $R\in\ttt_k$.
\item
To estimate the first sum on the right hand side of (5.11) from \cite{Azzam-Tolsa-GAFA} we take into account that if
$$\mu\biggl(R\setminus \bigcup_{Q\in \nex(R)} Q\biggr)>0,$$ then  $\Theta(2B_R)\approx \Theta(2B_Q)\lesssim C_*$ for infinitely many $Q\in\TT(R)$, by the above property (4) of the corona decomposition.

\item Also, $S_2=0$ because we are taking $F=E$ and so $\BZ=\varnothing$ in \cite[Section 5]{Azzam-Tolsa-GAFA}.
\end{itemize}

\vv
\subsection{Preliminaries for the proof of Theorem B}

To prove Theorem B it is enough to show that any subset $F\subset E$ with $\mu(F)>0$ contains another subset
$F'\subset F$ with $\mu(F')>0$ which is $n$-rectifiable. Having this in mind, by standard methods, it is easy to check
 that we can assume that, for some constants $C_*$ and $C_1$,
\begin{equation}\label{eqc*'}
\liminf_{r\to0} \frac{\mu(B(x,r))}{r^n}\leq C_*\quad \mbox{ for all $x\in E$},
\end{equation}
and
\begin{equation}\label{eqjones*'}
\int_0^1 \beta_{\mu,2}(x,r)^2\,\frac{dr}r\leq C_{1} \quad\mbox{ for all $x\in E$.}
\end{equation}

\vv
We need the following auxiliary result.

\begin{lemma}\label{lemdb*}
Let $\Lambda>2$.
Under the assumption \rf{eqc*'}, for $\mu$-a.e.\ $x\in E$ there exists a sequence of radii $r_k\to0$
such that
\begin{equation}\label{eqrk0'}
\mu(B(x,\Lambda r_k)) \leq 2\Lambda^{d}\,\mu(B(x,r_k))
\quad \mbox{ and }\quad \mu(B(x,r_k))\leq 10\,C_*\,\Lambda^n\,r^n.
\end{equation}
\end{lemma}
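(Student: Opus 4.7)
I would argue by contradiction that the exceptional set $F\subset E$ where no such sequence exists has $\mu$-measure zero. Write $F=\bigcup_{m\geq 1} F_m$, where $F_m$ consists of those $x$ admitting no ``good'' radius (i.e.\ one satisfying both conclusions of the lemma) in $(0,1/m)$; it suffices to show $\mu(F_m)=0$ for each fixed $m$. For $x\in F_m$, the hypothesis \rf{eqc*'} supplies a sequence $s_k\to 0$ of small-density radii with $\mu(B(x,s_k))/s_k^n\leq 2C_*<10C_*\Lambda^n$, so the density bound automatically holds at $s_k$. Since $x\in F_m$, the doubling inequality must therefore fail at every such $s_k$, giving $\mu(B(x,\Lambda s_k))>2\Lambda^d\mu(B(x,s_k))$.

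The key step is an upward iteration. Let $j^*=j^*(x,s_k)\geq 1$ be the smallest integer for which either (i) $\mu(B(x,\Lambda^{j^*}s_k))>10C_*\Lambda^n(\Lambda^{j^*}s_k)^n$ or (ii) $\Lambda^{j^*}s_k\geq 1/m$. For every $j\in\{0,1,\dots,j^*-1\}$ the density bound still holds, so the non-good assumption forces non-doubling, and telescoping yields
\[
\mu(B(x,\Lambda^{j^*}s_k))>(2\Lambda^d)^{j^*}\mu(B(x,s_k)).
\]
Coupled with $\mu(B(x,\Lambda^{j^*}s_k))\leq\mu(\R^d)$, this gives a super-polynomial decay of the form $\mu(B(x,s_k))\leq C_m\,s_k^{\alpha}$ with exponent $\alpha=d+\log 2/\log\Lambda>d$. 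The exit case (ii) produces this directly, since $j^*\geq\log(1/(ms_k))/\log\Lambda$. In the density-violation case (i) one also gets a uniform upper bound $\Lambda^{j^*}s_k\leq (\mu(\R^d)/(10C_*\Lambda^n))^{1/n}$, which lets us lower-bound $j^*$ in terms of $\log(1/s_k)$ and so again obtain the same super-polynomial estimate.

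Once this is in hand, I would cover $F_m$ by balls $B(x,s(x))$ of radius at most $\delta$ centered at small-density radii, extract a Vitali-disjoint subfamily $\{B(x_i,s_i)\}$ with $F_m\subset\bigcup B(x_i,5s_i)$, and bound $\mu(F_m\cap B(x_i,5s_i))$ by further covering the intersection with balls of radius at most $s_i$ having the super-polynomial mass estimate. Using the disjointness volume bound $\sum s_i^d\lesssim R^d$ (for $F_m$ contained in $B(0,R)$) together with $\alpha>d$, one obtains $\mu(F_m)\leq C_m' R^d\delta^{\alpha-d}$, which forces $\mu(F_m)=0$ upon letting $\delta\to 0$. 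The main difficulty I anticipate is making the super-polynomial estimate uniformly available at the scales used in the Vitali cover—it only holds along the sparse sequence of small-density radii, so one needs a two-layer covering argument to transfer the estimate to the $5s_i$-balls, and one must carefully merge the exit and density-violation subcases to get a single constant $C_m$.
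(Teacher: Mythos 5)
The paper's proof is a short direct construction, and your contradiction/covering approach has a real gap. The paper splits $E$ into $E_0=\{x:\Theta^{n,*}(x,\mu)\leq 4C_*\Lambda^n\}$ and its complement. On $E_0$, the standard fact that $\mu$-a.e.\ point admits a sequence of $(\Lambda,2\Lambda^d)$-doubling radii gives the first inequality, and the density bound in \rf{eqrk0'} holds automatically for small $r$ because $\Theta^{n,*}\leq 4C_*\Lambda^n<10C_*\Lambda^n$. Off $E_0$, starting from a low-density radius $s_k$ supplied by \rf{eqc*'}, the paper iterates \emph{downward}, taking the least $j\geq 0$ with $\mu(B(x,\Lambda^{-j}s_k))/(\Lambda^{-j}s_k)^n\geq 3C_*$ (such $j\geq 1$ exists since $\Theta^{n,*}>4C_*\Lambda^n$ while the density at $s_k$ is $\leq 2C_*$). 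The ``crossing'' at $r_k=\Lambda^{-j}s_k$ immediately gives both bounds: $\mu(B(x,\Lambda r_k))<3C_*(\Lambda r_k)^n\leq\Lambda^n\mu(B(x,r_k))\leq 2\Lambda^d\mu(B(x,r_k))$ and $\mu(B(x,r_k))\leq\mu(B(x,\Lambda r_k))<3C_*\Lambda^n r_k^n$.

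Your upward iteration breaks at the exit subcase (i). From $\mu(B(x,\Lambda^{j^*}s_k))>10C_*\Lambda^n(\Lambda^{j^*}s_k)^n$ together with $\mu(B(x,\Lambda^{j^*}s_k))\leq\mu(\R^d)$ you get $\Lambda^{j^*}s_k\leq R_0:=(\mu(\R^d)/(10C_*\Lambda^n))^{1/n}$, i.e.\ $j^*\leq\log(R_0/s_k)/\log\Lambda$ --- an \emph{upper} bound on $j^*$, not the lower bound your argument needs. Nothing prevents $j^*=1$: the density can jump from $\leq 2C_*$ at $s_k$ directly above the threshold $10C_*\Lambda^n$ at $\Lambda s_k$. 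In that case the telescoping yields only $\mu(B(x,\Lambda s_k))>2\Lambda^d\mu(B(x,s_k))$, which carries no super-polynomial decay, so the estimate $\mu(B(x,s_k))\lesssim_m s_k^\alpha$ with $\alpha>d$ is unavailable and the subsequent covering argument collapses. (There is also the covering issue you yourself flag --- Vitali gives control of $\mu(B(x_i,s_i))$, not of $\mu(B(x_i,5s_i))$; a Besicovitch cover centered on $F_m$ would fix that part.) The deeper lesson is that iterating toward larger scales, hoping non-doubling accumulates, loses control whenever the density crosses the threshold early; the paper's downward iteration toward the high upper-density regime, exploiting a single crossing of a fixed density level, is the mechanism that actually closes the argument.
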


\begin{proof}
Denote by $E_0$ the subset of points $x\in E$ such that $\Theta^{n,*}(x,\mu)\leq 4C_*\,\Lambda^n$. 
Let $x \in E_0$ and consider a sequence of balls $B(x,r_k)$ with 
$\mu(B(x,\Lambda r_k)) \leq 2\Lambda^{d}\,\mu(B(x,r_k))$ (such sequence exists for $\mu$-a.e.\ $x\in E_0$,
as shown in Chapter of \cite{Tolsa-llibre}, for example).
It is clear then that \rf{eqrk0'} holds for $k$ big enough for $\mu$-a.e.\ $x\in E_0$.

In the case $x\in E\setminus E_0$, let $s_k\to 0$ be a sequence of radii such that 
$$\frac{\mu(B(x,s_k))}{s_k^n} \leq 2C_*.$$
Note that, for each $k$,
$$\limsup_{j\to\infty} \frac{\mu(B(x,\Lambda^{-j}s_k))}{(\Lambda^{-j}s_k)^n} \geq \Lambda^{-n} 
\limsup_{r\to0}\frac{\mu(B(x,r))}{r^n} \geq 4C_*.$$
Now we let $j\geq 0$ be the least integer such that
$$\frac{\mu(B(x,\Lambda^{-j}s_k))}{(\Lambda^{-j}s_k)^n} \geq 3C_*,
$$
and we set $r_k= \Lambda^{-j}s_k$.
Then we have
$$\mu(B(x,\Lambda r_k)) = 
\mu(B(x,\Lambda^{-j+1}s_k)) \leq 3 C_* (\Lambda^{-j+1}s_k)^n \leq \Lambda^n\,\mu(B(x,\Lambda^{-j}s_k))
=\Lambda^n\,\mu(B(x,r_k)),$$
which implies that $\mu(B(x,\Lambda r_k)) \leq 2\Lambda^{d}\,\mu(B(x,r_k))$ and also that
$$\mu(B(x,r_k))\leq \mu(B(x,\Lambda\,r_k))  \leq 3 C_* \Lambda^n\,r_k^n.$$
This concludes the proof of \rf{eqrk0'} for $\mu$-a.e.\ $x\in E$.
\end{proof}

\vv


\subsection{Proof of Theorem B}
Because of Theorem A,
it is enough to show that 
$$M_n\mu(x)=\sup_{r>0}\frac{\mu(B(x,r))}{r^n}<\infty\quad \mbox{ for $\mu$-a.e.\ $x\in E$.}$$
Recall that we  are assuming the conditions \rf{eqc*'} and \rf{eqjones*'}.

We need to consider an auxiliary approximating measure $\wt\mu$ which we proceed to define.
By Lemma \ref{lemdb*} and a Vitali type covering lemma, there is a family of pairwise disjoint 
balls $B_i$, $i\in I$,
centered at $E$, which cover $\mu$-a.e.\ $E$,
satisfying
\begin{equation}\label{eqrk0}
\mu( \Lambda B_i) \leq 2\Lambda^{d}\,\mu(B_i)
\quad \mbox{ and }\quad \mu(B_i)\leq 10\,C_*\,\Lambda^n\,r(B_i)^n,
\end{equation}
and also that 
$$r(B_i)\leq \rho,$$ for
some arbitrary fixed $\rho>0$.
Let $I_0\subset I$ be a finite subfamily such that
$$\mu\Bigl(E\setminus \bigcup_{i\in I_0} B_i\Bigr) \leq \ve\,\mu(E),$$
where $\ve>0$ is some small value to be chosen below.
For each $i\in I_0$,
we consider an $n$-dimensional disk $D_i$ concentric with $B_i$ and radius $\frac12 r(B_i)$
and 
 we define
$$\wt\mu = \sum_{i\in I_0} \frac{\mu(B_i)}{\HH^n(D_i)}\,\HH^n|_{D_i},$$
so that $\wt\mu(D_i) = \mu(B_i)$ for each $i\in I_0$.

We claim now that if $\Lambda$ is taken big enough, then
\begin{equation}\label{eqcalu821}
\int M_{n}\wt \mu \,d\wt\mu\leq C(\Lambda)\,C_*\,\mu(E) +
C\,\iint_0^\infty\beta_{\mu,2}(x,r)^2\,\frac{dr}
r\,d\mu(x),
\end{equation}
with the constants on the right hand side depending neither on $\rho$ nor on $\ve$.
Before proving \rf{eqcalu821}
 we show how this implies that $M_n\mu(x)<\infty$ $\mu$-a.e. 
 Indeed, by an approximating argument, and
 denoting 
$$M_{n,\rho}\mu(x) = \sup_{r>\rho}\frac{\mu(B(x,r))}{r^n}$$
and
$$E_{\ve,\rho} := E\cap \bigcup_{i\in I_0} B_i,$$ 
it follows easily that
\begin{align}\label{eqcalu822**}
\int_{E_{\ve,\rho}} M_{n,\rho} (\chi_{E_{\ve,\rho}}\mu) \,d\mu & \leq C\,\int M_{n,\rho}\wt \mu \,d\wt\mu.
\end{align}
To check this, take $x,x'\in B_j$, $j\in I_0$, and $r\geq \rho$. Then
\begin{align*}
\mu(B(x,r)\cap E_{\ve,\rho}) & \leq \mu(B(x',2r)\cap E_{\ve,\rho}) \leq \sum_{i\in I_0: B_i\cap B(x',2r)\neq\varnothing} \mu(B_i)\\
& = \sum_{i\in I_0: B_i\cap B(x',2r)\neq\varnothing} \wt\mu(D_i) \leq\wt\mu(B(x',3r)),
\end{align*}
taking into account that $B(x,r)\subset B(x',2r)$ in the first inequality, and that the balls $B_i$ in the
before to last sum are contained $B(x',3r)$. Therefore,
$$M_{n,\rho} (\chi_{E_{\ve,\rho}}\mu) (x) \leq 3^n\,\inf_{x'\in D_j} M_{n,\rho} \wt\mu (x')$$
for all $x\in B_j$, $j\in I_0$. The preceding estimate readily yields \rf{eqcalu822**} by integrating
with respect to $\mu$ in $E_{\ve,\rho}$.

From \rf{eqcalu821} and \rf{eqcalu822**} we get
\begin{align*}
\int_{E_{\ve,\rho}} M_{n,\rho} (\chi_{E_{\ve,\rho}}\mu) \,d\mu& \leq
C(\Lambda)\,C_*\,\mu(E) +
C\,\iint_0^\infty\beta_{\mu,2}(x,r)^2\,\frac{dr} 
r\,d\mu(x)=: K,
\end{align*}
with $K$ independent of $\rho$ and $\ve$. For $\rho>0$ fixed, take $\ve_k=2^{-k}$, and note that up to a set of null $\mu$-measure, $E=\liminf_k E_{\ve_k,\rho}$. Recall that by definition,
$$\liminf_k E_{\ve_k,\rho} = \bigcup_{j\geq 1} G_j,\quad \mbox{ with }\quad
G_j = \bigcap_{k\geq j} E_{\ve_k,\rho}.$$
Obviously, we have
\begin{equation*}
\int_{G_j} M_{n,\rho} (\chi_{G_j}\mu) \,d\mu\leq
\int_{E_{\ve_j,\rho}} M_{n,\rho} (\chi_{E_{\ve_j,\rho}}\mu) \,d\mu\leq K.
\end{equation*}
Since the sequence of sets $G_j$ is increasing, by monotone convergence we get
$$\chi_{G_j} M_{n,\rho} (\chi_{G_j}\mu)(x)\to  M_{n,\rho} \mu(x)
\quad \mbox{for $\mu$-a.e. $x\in E$.}$$ 
Then, again by monotone convergence, we deduce that
$\int M_{n,\rho}\mu \,d\mu\leq
 K$.
Since this estimate is uniform on $\rho$, again by monotone convergence we infer that
$$\int M_{n}\mu \,d\mu\leq
 K,$$
which shows that $M_n\mu(x)<\infty$ $\mu$-a.e., as wished.
\vv

It just remains to prove \rf{eqcalu821} now. To this end, we consider the corona decomposition associated to 
$\wt\mu$ described in Section \ref{sec41}. Notice that
the condition \rf{eqgrowth00'}
holds (with $C(\Lambda)\, C_*$ instead of $C_*$) for some $r_0>0$ because of the definition of $\wt\mu$, 
\rf{eqrk0}, and because the family $I_0$ is finite. Therefore, by \rf{eqpack00} and the subsequent discussion,
\begin{equation}\label{eqsum441'}
\sum_{R\in\ttt_{\wt\mu}}\Theta_{\wt\mu}(2B_R)\,\wt\mu(R)\leq C(\Lambda)\,C_{*}\,\wt\mu(\R^d)+
C\,\int\!\!\int_0^\infty\beta_{\wt\mu,2}(x,r)^2\,\frac{dr}r\,d\wt\mu(x).
\end{equation}
By the property (3) of the corona decomposition it is immediate to check that
$$\int M_{n}\wt \mu \,d\wt\mu\lesssim \sum_{R\in\ttt_{\wt\mu}}\Theta_{\wt\mu}(2B_R)\,\wt\mu(R),$$
and thus 
\begin{equation}\label{eqcalu822}
\int M_{n}\wt \mu \,d\wt\mu\lesssim  C(\Lambda)\,C_*\,\mu(E) +
C\int\!\!\int_0^\infty\beta_{\wt\mu,2}(x,r)^2\,\frac{dr}r\,d\wt\mu(x).
\end{equation}
Thus we just have to estimate the double integral on the right hand side.

Consider $x\in D_i$ for some $i\in I_0$ and
$r>0$. Note that $\beta_{\wt\mu,2}(x,r)=0$ unless $B(x,r)$ intersects some disc $D_j$, $j\neq i$.
In fact, denoting
$$D(B_i,B_j) = r(B_i)+r(B_j) + \dist(B_i,B_j),$$
by construction (using that the radius of $D_k$ is one half of the one of $B_k$),
$$\beta_{\wt\mu,2}(x,r)^2 = \inf_L \sum_{j\in I_0: D(B_i,B_j)\leq 2r} \int_{B(x,r)\cap B_j}
\biggl(\frac{\dist(y,L)}r\biggr)^2\,\frac{d\wt\mu(y)}{r^n}.$$
Observe also that the balls $B_i$ and $B_j$ appearing in this equation are contained in $B(y,20r)$
for all $y\in B_i$.
Then, taking into account that $\wt\mu(B_k)= \mu(B_k)$ for each $k\in I_0$, 
letting $L$ be the $n$-plane that minimizes $\beta_{\mu,2}(x,20r)$, for each $j$ in the sum above we have:
\begin{align*}
\int_{B_j}
\biggl(\frac{\dist(z,L)}r\biggr)^2\,\frac{d\wt\mu(z)}{r^n}
&\leq \int_{B_j}
\biggl(\frac{\sup_{z'\in B_j}\dist(z',L)}r\biggr)^2\,\frac{d\wt\mu(z)}{r^n}\\
& =
\int_{B_j}
\biggl(\frac{\sup_{z'\in B_j}\dist(z',L)}r\biggr)^2\,\frac{d\mu(z)}{r^n}\\
& 
\leq 2\int_{B_j}
\biggl(\frac{\dist(z,L)}r\biggr)^2\,\frac{d\mu(z)}{r^n} + 4\, \frac{r(B_j)^2}{r^{n+2}}\,\wt\mu(B_j).
\end{align*}
Hence for all $x\in D_i$ and $y\in B_i$ we can estimate $\beta_{\wt\mu,2}(x,r)$ in terms of $\beta_{\mu,2}(y,20r)$ as follows:
$$\beta_{\wt\mu,2}(x,r)^2 \lesssim 
\beta_{\mu,2}(y,20r)^2 + \sum_{j\in I_0: D(B_i,B_j)\leq 2r} \frac{r(B_j)^2}{r^{n+2}}\,\wt\mu(B_j).$$
So we obtain
\begin{align}\label{eqalp00}
\iint_0^\infty\beta_{\wt\mu,2}(x,r)^2\,\frac{dr}r\,d\wt\mu(x) 
& = \sum_{i\in I_0} \int_{0}^\infty\!\!\int_{B_i} \beta_{\wt\mu,2}(x,r)^2\,d\wt\mu(x)\,\frac{dr}r\\
& \lesssim \sum_{i\in I_0} \int_{0}^\infty\!\!\int_{B_i} \beta_{\mu,2}(y,20r)^2\,d\mu(y)\,\frac{dr}r\nonumber\\
&\quad+ \sum_{i\in I_0}\, \int_{0}^\infty\!\!\sum_{j\in I_0: D(B_i,B_j)\leq 2r} \frac{r(B_j)^2}{r^{n+2}}\,\wt\mu(B_j)
\,\frac{dr}r\,\wt\mu(B_i). \nonumber
\end{align}
Clearly, the first sum on the right hand side does not exceed $C\iint_0^\infty\beta_{\mu,2}(x,r)^2\,\frac{dr}r\,d\mu(x)$. Concerning the last sum, by Fubini this equals
\begin{equation}\label{eqplug53}
\sum_{j\in I_0}r(B_j)^2\,\wt\mu(B_j) \int_{0}^\infty\!\!\!\sum_{i\in I_0: D(B_i,B_j)\leq 2r} \!\!\wt\mu(B_i)
\,\frac{dr}{r^{n+3}}
\leq \sum_{j\in I_0}r(B_j)^2\,\wt\mu(B_j) \int_{r>r(B_j)/2}\! \!\frac{\wt\mu(B(x_j,20r))}{r^n}
\,\frac{dr}{r^{3}},
\end{equation}
where $x_j$ is the center of $D_j$ and $B_j$.
Now note that, for $0<r\leq \frac1{10}\,\Lambda \,r(B_j)$,
$$\frac{\wt\mu(B(x_j,20r))}{r^n}\leq C(\Lambda)\,\frac{\mu(B(x_j,20r))}{r^n}\leq C(\Lambda)\frac{\mu(B_j)}{r(B_j)^n}\leq C(\Lambda)\,C_*,$$
and also that
$$\frac{\wt\mu(B(x_j,20r))}{r^n}\leq C\,\inf_{y\in B_j} M_n\wt\mu(y)
\quad \mbox{for all $r>0$ and $y\in B_j$.}$$
Therefore,
\begin{align*}
\int_{r>r(B_j)/2}\! \frac{\wt\mu(B(x_j,20r))}{r^n}
\,\frac{dr}{r^{3}} &\leq \int_{r(B_j)/2}^{\frac1{10}\,\Lambda r(B_j)}  C(\Lambda)\,C_* \,\frac{dr}{r^{3}}+
\int_{\frac1{10}\,\Lambda r(B_j)}^\infty \inf_{y\in B_j} M_n\wt \mu(y)\,\frac{dr}{r^{3}}\\
&\lesssim \frac{C(\Lambda)\,C_*}{r(B_j)^2} + \frac1{\Lambda^2\,r(B_j)^2}\inf_{y\in B_j} M_n\wt\mu(y)
.
\end{align*}
Plugging this estimate into \rf{eqplug53}, we deduce that the last term on the right hand side of \rf{eqalp00} satisfies
\begin{align*}
\sum_{i\in I_0}\, \int_{0}^\infty\!\!\sum_{j\in I_0: D(B_i,B_j)\leq 2r} \frac{r(B_j)^2}{r^{n+2}}\,\wt\mu(B_j)
\,\frac{dr}r\,\wt\mu(B_i)
&\lesssim
\sum_{j\in I_0}C(\Lambda)\,C_*\,\wt\mu(B_j)\\
&\quad + \frac1{\Lambda^2}\sum_{j\in I_0}\wt\mu(B_j)\inf_{y\in B_j} M_n\wt\mu(y)\\
&\lesssim C(\Lambda)\,C_*\,\mu(E) + \frac1{\Lambda^2}\int M_n\wt\mu\,d\wt\mu.
 \end{align*}

From \rf{eqcalu822}, \rf{eqalp00}, and the preceding estimate we obtain
$$\int M_{n}\wt \mu \,d\wt\mu\lesssim 
C(\Lambda)\,C_*\,\mu(E) + \frac1{\Lambda^2}\int M_n\wt\mu\,d\wt\mu+  \iint_0^\infty\beta_{\mu,2}(x,r)^2\,\frac{dr}r\,d\mu(x).$$
Choosing $\Lambda$ big enough and taking into account that  $M_n\wt\mu\in L^\infty(\wt\mu)$ (by the construction of $\wt\mu$)
and that $\wt\mu$ is finite, we derive 
$$\int M_{n}\wt \mu \,d\wt\mu\leq 
C(\Lambda)\,C_*\,\mu(E) + C\iint_0^\infty\beta_{\mu,2}(x,r)^2\,\frac{dr}r\,d\mu(x),$$
as wished.

\vv

\vvv

\end{document}